\documentclass[11pt,reqno]{amsart}

\usepackage{float}

\usepackage{placeins}

\usepackage[labelfont=bf]{caption}
\DeclareCaptionLabelSeparator{none}{ }
\captionsetup{labelsep=none}
\makeatletter
\renewcommand{\fnum@figure}{Fig. \thefigure}
\makeatother

\usepackage{etoolbox}

\makeatletter
\newcommand*\NoIndentAfterEnv[1]{%
  \AfterEndEnvironment{#1}{\par\@afterindentfalse\@afterheading}}
\makeatother

\NoIndentAfterEnv{itemize}
\NoIndentAfterEnv{enumerate}
\NoIndentAfterEnv{proof}
\NoIndentAfterEnv{figure}
\NoIndentAfterEnv{equation}
\NoIndentAfterEnv{minted}
\NoIndentAfterEnv{Satz}
\NoIndentAfterEnv{Def}
\NoIndentAfterEnv{Ex}
\NoIndentAfterEnv{Rem}
\NoIndentAfterEnv{Algo}
\NoIndentAfterEnv{Listing}
\NoIndentAfterEnv{Prop}
\NoIndentAfterEnv{PropDef}
\NoIndentAfterEnv{Lem}
\NoIndentAfterEnv{Kor}
\NoIndentAfterEnv{Ass}
\NoIndentAfterEnv{specialSatz}

\usepackage{scalerel,stackengine}
\stackMath
\newcommand\reallywidehat[1]{%
\savestack{\tmpbox}{\stretchto{%
  \scaleto{%
    \scalerel*[\widthof{\ensuremath{#1}}]{\kern-.6pt\bigwedge\kern-.6pt}%
    {\rule[-\textheight/2]{1ex}{\textheight}}
  }{\textheight}%
}{0.5ex}}%
\stackon[1pt]{#1}{\tmpbox}%
}

\usepackage[OT2,T1]{fontenc}
\usepackage[T1]{fontenc}
\usepackage[english]{babel}
\usepackage{csquotes}
\usepackage{lmodern}
\usepackage{textcomp}

\usepackage{pythonhighlight}
\usepackage{listings}

\definecolor{codegreen}{rgb}{0,0.6,0}
\definecolor{codegray}{rgb}{0.5,0.5,0.5}
\definecolor{codepurple}{rgb}{0.58,0,0.82}
\definecolor{backcolour}{rgb}{0.95,0.95,0.92}
\lstdefinestyle{mystyle}{
  backgroundcolor=\color{backcolour}, commentstyle=\color{codegreen},
  keywordstyle=\color{magenta},
  numberstyle=\tiny\color{codegray},
  stringstyle=\color{codepurple},
  basicstyle=\ttfamily\footnotesize,
  breakatwhitespace=false,         
  breaklines=true,                 
  captionpos=b,                    
  keepspaces=true,                 
  numbers=left,                    
  numbersep=5pt,                  
  showspaces=false,                
  showstringspaces=false,
  showtabs=false,                  
  tabsize=2
}
\lstset{style=mystyle}

\usepackage{amssymb, amsmath, amsthm}
\usepackage{fullpage}
\usepackage{graphicx}
\usepackage{svg}
\usepackage{calc}

\usepackage{caption}
\usepackage{subcaption}

\usepackage[bottom]{footmisc}

\usepackage{hyperref}
\usepackage{url}
\usepackage{enumerate}

\usepackage{soul}

\usepackage{amsmath}
\usepackage{amssymb}
\usepackage{amsthm}
\usepackage{stmaryrd}

\usepackage{graphicx} 

\usepackage{mathtools}

\usepackage{mathabx,epsfig}

\usepackage{stmaryrd}

\usepackage{wasysym}


\usepackage{color}
\definecolor{darkred}{RGB}{170,0,0}
\definecolor{darkgreen}{RGB}{0,120,0}

\usepackage{soul}
\setuldepth{noDescent}

\usepackage{tikz}
\usetikzlibrary{cd}
\usetikzlibrary{babel}
\tikzset{
	labl/.style={anchor=south, rotate=90, inner sep=.5mm}
}

\usepackage{tcolorbox}

\usepackage{hyperref}
\hypersetup{
	colorlinks,
	citecolor=blue,
	filecolor=black,
	linkcolor=blue,
	urlcolor=blue
}

\usepackage{booktabs}

\usepackage{nowidow}

\usepackage{calligra}
\usepackage{mathrsfs}

\usepackage{enumerate}

\usepackage[retainorgcmds]{IEEEtrantools}


\definecolor{lightgray}{rgb}{0.4,0.4,0.4}


\let\le\leqslant
\let\ge\geqslant



\DeclareMathOperator{\an}{an}

\DeclareMathOperator{\characteristic}{char}

\DeclareMathOperator{\ctt}{CTT}

\DeclareMathOperator{\interior}{int}

\DeclareMathOperator{\Nm}{Nm}

\DeclareMathOperator{\ns}{ns}

\DeclareMathOperator{\ret}{ret}

\DeclareMathOperator{\slope}{slope}

\DeclareMathOperator{\Spec}{Spec}

\DeclareMathOperator{\tail}{tail}
\DeclareMathOperator{\tame}{tame}

\DeclareMathOperator{\sHom}{\mathscr{H}\text{\kern -3pt {\calligra\large om}}\,}


\mathcode`l="8000
\begingroup
\makeatletter
\lccode`\~=`\l
\DeclareMathSymbol{\lsb@l}{\mathalpha}{letters}{`l}
\lowercase{\gdef~{\ifnum\the\mathgroup=\m@ne \ell \else \lsb@l \fi}}%
\endgroup



\renewcommand{\hat}{\widehat}
\renewcommand{\phi}{\varphi}
\renewcommand{\epsilon}{\varepsilon}


\usepackage{mathtools}


\newcommand{\abs}[1]{\left\lvert #1 \right\rvert}

\newcommand{\rest}[2]{\left.{#1}\right|_{#2}}




\newcommand{\BA}{{\mathbb{A}}}

\newcommand{\BK}{{\mathbb{K}}}

\newcommand{\BP}{{\mathbb{P}}}
\newcommand{\BQ}{{\mathbb{Q}}}
\newcommand{\BR}{{\mathbb{R}}}

\newcommand{\CH}{{\mathcal{H}}}

\newcommand{\CO}{{\mathcal O}}

\newcommand{\CX}{{\mathcal X}}
\newcommand{\CY}{{\mathcal Y}}

\newcommand{\rD}{{\mathsf D}}

\newcommand{\rU}{{\mathsf U}}

\newcommand{\rX}{{\mathsf X}}
\newcommand{\rY}{{\mathsf Y}}


\newtheorem{Prin}{Principle}[section]
\newtheorem{Prop}[Prin]{Proposition}
\newtheorem{Lem}[Prin]{Lemma}
\newtheorem{Satz}[Prin]{Theorem}

\newtheorem{Kor}[Prin]{Corollary}

\newcommand{\thistheoremname}{}
\newtheorem*{genericthm}{\thistheoremname}

\theoremstyle{definition}
\newtheorem{Def}[Prin]{Definition}
\newtheorem{Ex}[Prin]{Example}
\newtheorem{Rem}[Prin]{Remark}
\newtheorem{Algo}[Prin]{Algorithm}

\definecolor{ferrarired}{rgb}{1.0, 0.11, 0.0}
\newtcolorbox{01Def}{colback=black!0,colframe=ferrarired!50}

\usepackage[utf8]{inputenc}


\begin{document}

\title{Semistable reduction of plane quartics at $p=3$}

\begin{abstract}
We explain how to compute the semistable reduction of plane quartic curves over local fields of residue characteristic $p=3$. Our approach is based on finding suitable degree-$3$ coverings of the projective line by such plane quartics and on the different function of Cohen, Temkin, and Trushin associated to the analytifications of these coverings. In particular, we give an explicit formula for computing the different function on a given interval. The resulting algorithm for computing the semistable reduction of plane quartics is implemented in SageMath, and we illustrate it by determining the semistable reduction of a particular plane quartic at $p=3$ that arises as a quotient of the non-split Cartan modular curve $X^+_\textrm{ns}(27)$.
\end{abstract}

\keywords{Semistable reduction, curves over local fields, plane quartic curves, non-archimedean analytic geometry. Mathematics Subject Classification: 11G20, 14H25}

\author{Ole Ossen}

\email{ole.ossen@gmail.com}

\maketitle

\section*{Introduction}

\noindent Let $K$ be a field that is complete with respect to a non-trivial discrete non-archimedean valuation
\begin{equation*}
v_K\colon K\to\BR\cup\{\infty\}.
\end{equation*}
We denote the valuation ring of $K$ by $\CO_K$ and assume that the residue field $\kappa$ of $\CO_K$ is a perfect field of positive characteristic $p\coloneqq\characteristic(\kappa)$. Suppose that $Y$ is a smooth projective geometrically irreducible $K$-curve. According to the Semistable Reduction Theorem proved by Deligne and Mumford (\cite{deligne-mumford}), the base change of $Y$ to a suitable finite field extension of $K$ has semistable reduction. In this article we discuss how to compute this semistable reduction in the case that $Y$ is a plane quartic curve and $\kappa$ a field of characteristic $3$. To this end, we will explain how to find a \emph{potentially semistable model} of $Y$, meaning a model whose normalized base change to a suitable finite extension $L/K$ is a semistable model.

Our approach is based on choosing a suitable finite covering $\phi\colon Y\to\BP^1_K$. If one can find such a covering of degree smaller than the residue characteristic $p$, one may obtain a potentially semistable model of $Y$ as the normalization in $K(Y)$ of a model of $\BP_K^1$ \emph{separating the branch points} of $\phi$. This has been used in many works concerning the semistable reduction of curves, for example \cite{bouw-wewers} (which concerns the case of superelliptic curves) and \cite{ddmm} and related works (treating the case of hyperelliptic curves in great detail).

The strategy of separating branch points is not in general sufficient for computing the semistable reduction of plane quartic curves at residue characteristic $p=3$. Indeed, plane quartics admit a morphism $\phi\colon Y\to\BP_K^1$ of degree $3$, but not of degree smaller than $3$. An additional complication is that one may not in general find a degree-$3$ morphism $\phi\colon Y\to\BP_K^1$ that is \emph{galois}. While some work has been done on computing the semistable reduction of coverings of degree $p=\characteristic(\kappa)$, this has mostly been restricted to cyclic coverings of $\BP_K^1$, that is, to the case of superelliptic curves.

Coleman studied the semistable reduction of cyclic coverings $Y\to\BP_K^1$ of degree $p$ (\cite[\mbox{Section 6}]{coleman}) and introduced the notion of \emph{$p$-approximations}, used for finding a suitable equation for $Y$. Building on this, Lehr and Matignon (\cite{lehr}, \cite{matignon}, \cite{lehr-matignon}) explained how to compute the semistable reduction of $Y$ in the case of $p$-cyclic $\phi\colon Y\to\BP_K^1$ conditional on the assumption that the branch locus of $\phi$ be \emph{equidistant}. A related approach not relying on the assumption of equidistant branch locus was developed in \cite{arzdorf} and \cite{arzdorf-wewers}, a version of which is implemented in the SageMath (\cite{sagemath}) package MCLF (\cite{mclf}). 

Regarding the semistable reduction of plane quartics, mention should be made of two recent works: In \cite{super-winky-cat}, the stable reduction of plane quartics is described in terms of their \emph{Dixmier--Ohno invariants}, but conditionally on the assumption $\characteristic(\kappa)>7$. In \cite{cayley-octads}, the stable reduction of plane quartics is characterized in terms of combinatorial objects called \emph{Cayley Octads}. This approach is implemented in Magma, but as of now conjectural.

In the author's PhD thesis \cite{diss}, a general method for computing the semistable reduction of a covering $Y\to\BP_K^1$ of degree $p=\characteristic(\kappa)$ is developed. For the case of plane quartics, it is made completely explicit and, building on the MCLF package, is implemented in SageMath. The purpose of the present article is to extract the treatment of plane quartics and present it in a self-contained manner. The most important theoretical results of the first three chapters of  \cite{diss} will also appear separately (\cite{ossenwewers}).

\subsection*{Outline}

We begin by reviewing the notion of (potentially) semistable models of a $K$-curve $X$ and explain their connection to the so-called Type II valuations on the function field $K(X)$. This leads naturally to the \emph{analytification} $X^{\an}$ of $X$ in the sense of Berkovich. In \mbox{Section \ref{sec-delta}}, we collect what we need regarding the \emph{different function} of Cohen, Temkin, and Trushin (\cite{ctt}). It is a function on the analytification $Y^{\an}$ associated to a covering $\phi\colon Y\to X$ and controls the semistable reduction of $\phi$ to a large degree.

In \mbox{Section \ref{sec-quartics}}, we discuss the normal form we use for plane quartics. Following a more general recipe developed in \cite[\mbox{Chapter 3}]{diss}, we explain how to modify the equation of a given plane quartic so that it becomes suitable for reduction.
Next, we introduce the \emph{tame locus} associated to a plane quartic $Y$ with associated covering $\phi\colon Y\to\BP_K^1$. It is a subdomain of $(\BP^1_K)^{\an}$ defined in terms of the different function and contains essentially the same information regarding the semistable reduction of $\phi$. We separate the tame locus into two parts, the \emph{tail locus} and the \emph{interior locus}, and give an algorithm for computing both. For computing the interior locus, we derive a completely explicit formula for the different function in terms of valuations of certain coefficients of a suitable equation for $Y$ (Remark \ref{rem-final-delta-formula}). The boundary points of the tame locus yield a potentially semistable model of $Y$.

In the final section, we apply our results to the example of a quotient of the modular curve $X_{\ns}^+(27)$.

\subsection*{Conventions and notations}

Throughout, $K$ denotes a field that is complete with respect to a non-trivial discrete non-archimedean valuation $v_K$ of rank $1$. The ring of integers of $K$ is denoted by $\CO_K$. We assume that its residue field, denoted $\kappa$, is a perfect field of positive characteristic $p\coloneqq\characteristic(\kappa)$, but make no assumption regarding the characteristic of $K$.

We choose an algebraic closure of $K$ and denote its completion by $\BK$. There is a canonical extension of $v_K$ to $\BK$, which we also denote by $v_K$. We denote by $\pi\in K$ a choice of uniformizer for $v_K$. We fix a family of elements $\pi^s\in\BK$, where $s\in\BQ$, such that $v_K(\pi^s)=s$ and $\pi^s\pi^t=\pi^{s+t}$ for $s,t\in\BQ$.

All $K$-curves are assumed to be smooth, projective, and geometrically irreducible. We denote the function field of such a $K$-curve $X$ by $K(X)$. If $L/K$ is a field extension, we denote the function field of $X_L$ by $L(X)$. A \emph{covering} of $K$-curves $Y\to X$ is a finite separable morphism.

We will use normal italic letters (such as $X,Y$) to denote objects over $K$, curly letters (such as $\CX,\CY$) to denote objects over $\CO_K$, and serifless letters (such as $\rD,\rU$) to denote analytic curves in the sense of Berkovich.

\subsection*{Acknowledgements} I want to thank Stefan Wewers for his helpful comments on an earlier version of this paper, and Steffen Müller for giving me an opportunity to present this research in Groningen, three years after a question posed by him inspired it in the first place. Further thanks go to the anonymous referees for valuable suggestions for improving this paper.

\section{Models, valuations, and analytification}

\noindent Let $X$ be a smooth projective geometrically irreducible $K$-curve. An \emph{$\CO_K$-model} of $X$, or just a \emph{model} for short, is a normal flat and proper $\CO_K$-scheme $\CX$ with an isomorphism $\CX\otimes_{\CO_K}K\cong X$. 

A model $\CX$ is called \emph{semistable} if the special fiber $\CX_s=\CX\otimes_{\CO_K}\kappa$ is a \emph{semistable curve}. The latter means that $\CX_s$ is reduced and has only ordinary double points as singularities. We say that $X$ \emph{has semistable reduction} if a semistable $\CO_K$-model of $X$ exists. Analogous definitions hold for $\BK$-curves (where $\BK$ is the completion of an algebraic closure of $K$).

Given a covering $Y\to X$, the normalization of a model $\CX$ of $X$ in the function field $K(Y)$ is a model of $Y$. Similarly, given a finite field extension $L/K$, we define the \emph{normalized base change} of $\CX$ to be the normalization of $\CX\otimes_{\CO_K}\CO_L$ and denote it by $\CX_L$. It is an $\CO_L$-model of $X_L$.

The Semistable Reduction Theorem of Deligne--Mumford for algebraic curves (\cite{deligne-mumford}) states that given a $K$-curve $X$, there exists a finite separable field extension $L/K$ such that $X_L$ has semistable reduction.

\begin{Def}
We call an $\CO_K$-model $\CX$ of $X$ \emph{potentially semistable} if the normalized base change $\CX_L$ is a semistable model of $X_L$ for some finite extension $L/K$.  
\end{Def}

\begin{Rem}\label{rem-potentially-semistable}
Let $\CX$ be an $\CO_K$-model of $X$. By a theorem of Epp (\cite{epp}), there exists a finite extension $L/K$ such that the normalized base change $\CX_L$ has reduced special fiber. Because of this, any further base change, say to an extension $M/L$, is already normal, so equals the normalized base change (\cite[Remark 1.11]{diss}). In particular, the special fiber of $\CX_L$ is a semistable curve if and only if the special fiber of $\CX_M$ is.

It follows that the normalized base change $\CX_\BK$ is an $\CO_\BK$-model, which is semistable if and only if $\CX$ is potentially semistable.
\end{Rem}

A valuation $v$ on the function field $K(X)$ of a $K$-curve $X$ is called a \emph{Type II valuation} if it extends $v_K$ and has residue field of transcendence degree $1$ over $\kappa$. Given an $\CO_K$-model $\CX$ of $X$ and an irreducible component $Z$ of the special fiber $\CX_s$ of $\CX$, the local ring $\CO_{\CX,\xi_Z}$ at the generic point $\xi_Z$ of $Z$ is a discrete valuation ring. We denote its valuation by $v_Z$; it is a Type II valuation.

\begin{Prop}
\label{julian-prop}
\begin{enumerate}[(a)]
\item The map
\begin{equation*}
\CX\mapsto V(\CX)\coloneqq\{v_Z\mid Z\subseteq\CX_s\text{ irreducible component}\}
\end{equation*}
induces a bijection between isomorphism classes of models of $X$ and finite non-empty sets of Type II valuations on $K(X)$.
\item Let $Y\to X$ be a covering of $K$-curves. Let $\CX$ be a model of $X$ and let $\CY$ be the normalization of $\CX$ in the function field $K(Y)$. Then the valuations in $V(\CY)$ are the extensions to $K(Y)$ of the valuations in $V(\CX)$.
\end{enumerate}
\end{Prop}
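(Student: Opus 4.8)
The plan is to treat part (a) first, then deduce (b) from the universal property of normalization.

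For part (a), the key is the standard dictionary between models and sets of Type II valuations, which goes back to the theory of regular/normal models of curves over a DVR. First I would establish that the map is well-defined: given a model $\CX$, its special fiber $\CX_s$ is a proper curve over $\kappa$ with finitely many irreducible components $Z$, and since $\CX$ is normal, the local ring $\CO_{\CX,\xi_Z}$ at each generic point $\xi_Z$ is a one-dimensional normal local ring, hence a DVR dominating $\CO_K$; its valuation $v_Z$ restricted to $K(X)$ extends $v_K$, and its residue field is the function field $\kappa(Z)$, which has transcendence degree $1$ over $\kappa$ since $Z$ is a curve. So $V(\CX)$ is a finite non-empty set of Type II valuations, and two isomorphic models clearly give the same set. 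For injectivity, I would use that a normal model of $X$ is determined by its local rings at codimension-$1$ points together with the generic fiber: more precisely, $\CX = \bigcap_{v\in V(\CX)} \Spec\CO_v$ inside the constant scheme on $K(X)$, i.e. $\CX$ is recovered as the relative normalization glued from the valuation rings and $X$ itself, so $V(\CX_1)=V(\CX_2)$ forces $\CX_1\cong\CX_2$. For surjectivity, given a finite non-empty set $S=\{v_1,\dots,v_n\}$ of Type II valuations, one constructs a model by taking, for a suitable affine open cover of $X$, the normalization of $X$ inside the intersection of the corresponding valuation rings; the point is that a Type II valuation $v$ has a center on some projective model of $X$ that is a codimension-$1$ point (because $\trdeg$ of the residue field is exactly $1$, the center cannot be the generic point, and a dimension count shows it has codimension $1$), and after blowing up one arranges all the $v_i$ to be realized as generic points of components of a single model. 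The main obstacle here is the surjectivity/well-posedness of this construction — one must check properness and flatness of the resulting scheme, and that no extraneous components of the special fiber appear; this is where I would either cite the relevant results (e.g. from Bosch--L\"utkebohmert--Raynaud, or the treatment in \cite{diss}) rather than reprove them.

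For part (b), let $\CX$ be a model of $X$ with $V(\CX)=\{v_1,\dots,v_n\}$, and let $\CY$ be the normalization of $\CX$ in $K(Y)$. I would argue componentwise. Since $Y\to X$ is finite and $\CY\to\CX$ is finite (normalization in a finite extension of the function field), the image of each generic point $\eta$ of a component of $\CY_s$ is a generic point $\xi_{Z}$ of a component of $\CX_s$, so $v_\eta$ lies over some $v_Z = v_i$. Conversely, the fiber of $\CY\to\CX$ over $\xi_{Z}$ is $\Spec$ of the integral closure of the DVR $\CO_{\CX,\xi_Z}$ inside $K(Y)$, which since $K(Y)/K(X)$ is finite separable is a semilocal Dedekind domain whose maximal ideals correspond exactly to the extensions of $v_Z$ to $K(Y)$; each such maximal ideal is a codimension-$1$ point of $\CY$ lying in $\CY_s$, hence the generic point of a component. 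Therefore $V(\CY)$ is precisely the set of all extensions to $K(Y)$ of the valuations in $V(\CX)$. The only subtlety is ensuring that distinct components of $\CY_s$ really do dominate components of $\CX_s$ (no component contracted to a closed point) — this follows because $\CY\to\CX$ is finite, hence closed with finite fibers, so it cannot contract a one-dimensional component to a point. I expect part (b) to be routine once (a) is in hand.
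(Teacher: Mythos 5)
The paper does not actually prove Proposition~\ref{julian-prop}; it defers entirely to Rüth's thesis (\cite{rueth}, Chapter~3 and Section~5.1.2). Your sketch is a reasonable reconstruction of the argument there, and part~(b) in particular is correct as you present it: finiteness of $\CY\to\CX$ (which uses that $\CO_K$, being a complete DVR, is excellent, so normalization in a finite field extension is finite) rules out contracted components, and the integral closure of the DVR $\CO_{\CX,\xi_Z}$ in $K(Y)$ has maximal ideals in bijection with the extensions of $v_Z$.

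Two small points on part~(a) are worth tightening. First, the reconstruction formula you write for injectivity is imprecise: for a normal integral Noetherian scheme one has $\CO(U)=\bigcap_{\xi}\CO_{\CX,\xi}$ over \emph{all} codimension-$1$ points $\xi\in U$, which includes the closed points of the generic fiber $X$ as well as the generic points of $\CX_s$; since $X$ is fixed data, the conclusion that $V(\CX)$ determines $\CX$ is unaffected, but the identity as stated is not quite what is used. Second, your parenthetical justification for realizability in the surjectivity step (``a dimension count shows the center has codimension $1$'') is not correct as stated: on an arbitrary model the center of a Type~II valuation can perfectly well be a closed point of $\CX_s$, hence of codimension~$2$. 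What is true is that a Type~II valuation satisfies the Abhyankar equality (transcendence degree $1$ plus rational rank $1$ equals $\dim\CX=2$), and such valuations are divisorial, i.e.\ become generic points of a component after a finite sequence of blowups; you correctly invoke blowups afterwards, and you correctly flag that you would cite for this, but the parenthetical should be removed or replaced by the Abhyankar argument. With those repairs your proof matches the intent of the cited reference.
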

\begin{proof}
See \cite[\mbox{Chapter 3} and \mbox{Section 5.1.2}]{rueth}.
\end{proof}

We will say that a model $\CX$ \emph{supports} a Type II valuation $v$ if $v$ is contained in $V(\CX)$.

\subsection*{Analytification}

We now recall some facts about \emph{analytic curves} in the sense of Berkovich (\cite[\mbox{Chapter 4}]{berkovich}). These are certain locally ringed spaces, though in this article we will only explicitly use their structure of topological spaces. For each algebraic $K$-curve $X$ there is an associated analytic curve $X^{\an}$, the \emph{analytification} of $X$ (\cite[\mbox{Section 3.4}]{berkovich}). The underlying set of $X^{\an}$ is obtained by adding to the closed points of $X$ one point for each valuation $v\colon K(X)\to\BR\cup\{\infty\}$ extending $v_K$. If the point $\xi\in X^{\an}$ corresponds to a valuation on $K(X)$, we will denote this valuation by $v_\xi$. If $v_\xi$ is a \mbox{Type II} valuation, then $\xi$ is called a \emph{Type II point}. 

Let $\xi\in X^{\an}$ be a point corresponding to the valuation $v_\xi$ on $K(X)$. The \emph{completed residue field at $\xi$}, denoted $\CH(\xi)$, is the completion of $K(X)$ with respect to $v_\xi$. The residue field of $\CH(\xi)$ is denoted $\kappa(\xi)$. If $\xi$ is a Type II point, then $\kappa(\xi)$ is of transcendence degree $1$ over $\kappa$, the residue field of $K$. In other words, it is an algebraic function field over $\kappa$. The \emph{genus} of $\xi$, denoted $g(\xi)$, is the genus of the corresponding smooth projective $\kappa$-curve, the so-called \emph{reduction curve} at $\xi$.

It will be convenient to associate a function on $K(X)$ similar to the valuations $v_\xi$ to each closed point $P\in X$:
\begin{equation*}
v_P\colon K(X)\to\BR\cup\{\pm\infty\},\qquad f\mapsto\begin{cases}
v_K(f(P))&\textrm{if $f\in\CO_{X,P}$},\\
-\infty&\textrm{otherwise}.
\end{cases}
\end{equation*}
The $v_P$ are so-called \emph{pseudovaluations}, and satisfy the usual rules for valuations for all $f\in\CO_{X,P}$.

\begin{Def}\label{def-valuative-functions}
Let $f\in K(X)^\times$ be a non-zero rational function. The \emph{valuative function} associated to $f$ is the evaluation function on $X^{\an}$
\begin{equation*}
\hat{f}\colon X^{\an}\to\BR\cup\{\pm\infty\},\qquad\xi\mapsto v_\xi(f).
\end{equation*}
\end{Def}

Type II valuations on the rational function field $K(x)$ can be described using \emph{discoids}. Given a monic irreducible polynomial $\psi\in K[x]$ and a rational number $r\in\BQ$, the set 
\begin{equation*}
\rD[\psi,r]\coloneqq\{\xi\in(\BP_K^1)^{\an}\mid v_\xi(\psi)\ge r\}
\end{equation*}
is called the \emph{discoid} with center $\psi$ and radius $r$. In addition, we also consider as a discoid
\begin{equation*}
\rD[\infty,r]\coloneqq\{\xi\in(\BP_K^1)^{\an}\mid v_\xi(x)\le -r\},
\end{equation*}
the disk of radius $r$ centered at $\infty$. For a discoid $\rD$, there is an associated Type II valuation $v_\rD$ on $K(x)$ (cf.\ \cite[\mbox{Lemma 4.50}]{rueth}), defined by
\begin{equation*}
v_\rD(f)=\inf\{v_\xi(f)\mid\xi\in\rD\},\qquad f\in K[x].
\end{equation*}
In fact, $v_\rD$ is the valuation corresponding to the unique boundary point of $\rD$ in $(\BP_{K}^1)^{\an}$. It can be shown that all Type II valuations on $K(x)$ arise in this way (\cite[\mbox{Theorem 4.56}]{rueth}).

\subsection*{Skeletons}\sloppy The projective line $(\BP_K^1)^{\an}$ is a \emph{simply connected special quasipolyhedron} (\cite[\mbox{Theorem 4.3.2}]{berkovich}). In particular, this means that $(\BP_K^1)^{\an}$ is uniquely path-connected. Thus it makes sense to consider the tree spanned by a finite set $S$ of points in $(\BP_K^1)^{\an}$; it consists of all paths connecting elements of $S$. Denoting by $\Gamma$ the tree spanned by $S$, there is a \emph{retraction map}
\begin{equation*}
\ret_\Gamma\colon(\BP_K^1)^{\an}\to\Gamma
\end{equation*}
that sends a point $\xi\in(\BP_K^1)^{\an}$ to the point $\xi'$, where $[\xi,\xi']$ is the unique path with $[\xi,\xi']\cap\Gamma=\{\xi'\}$.

Given a $K$-curve $X$, it is often convenient to work with the analytification $X^{\an}_\BK$ of the base change of $X$ to $\BK$. There is a natural morphism $X^{\an}_\BK\to X^{\an}$. If the point $\xi\in X^{\an}$ corresponds to the valuation $v_\xi$, then the points in $X^{\an}_\BK$ above $\xi$ correspond to extensions of $v_\xi$ to valuations on $\BK(X)$ extending the given valuation on $\BK$.

The combinatorial structure of analytic $\BK$-curves is captured by \emph{skeletons}. A skeleton of $X_\BK$ is a subspace $\Gamma\subset X_\BK^{\an}$ with the following properties:
\begin{enumerate}[(a)]
\item $\Gamma$ is homeomorphic to the topological realization of a finite graph
\item The vertices of this graph correspond to closed points and Type II points, among them all \mbox{Type II} points of positive genus
\item The complement $X^{\an}_\BK\setminus\Gamma$ is a disjoint union of open disks
\end{enumerate}
This definition follows \cite[\mbox{Section 3.5.1}]{ctt}. In \cite[\mbox{Definition 3.3}]{bpr}, skeletons are defined slightly differently, in terms of \emph{semistable vertex sets}, which are finite sets of Type II points whose complement consists of open disks and finitely many open anuli. The skeleton associated to a semistable vertex set is then defined to be the union of the vertex set and the skeletons of the finitely many open anuli. Such a skeleton associated to a semistable vertex set is a skeleton as defined above (\cite[\mbox{Lemma 3.4}]{bpr}). Conversely, the set of Type II vertices of a skeleton as defined above is a semistable vertex set (\cite[\mbox{Remark 3.5.2(ii)}]{ctt}).

\begin{Rem}\label{rem-skeletons-and-models}
\begin{enumerate}[(a)]
\item Let $\CX$ be an $\CO_K$-model of a $K$-curve $X$ and let $W$ be the set of extensions of elements of $V(\CX)$ to valuations on $\BK(X)$ extending the valuation on $\BK$. \emph{If $\{\xi\in X_\BK^{\an}\mid v_\xi\in W\}$ is the vertex set of a skeleton, then $\CX$ is potentially semistable.} To see this, note that by \cite[\mbox{Theorem 4.11}]{bpr} and \cite[\mbox{Remark 4.2(2)}]{bpr} we have $W=V(\CX')$ for a semistable model $\CX'$ of $X_\BK$. Now \cite[Theorem 2]{green} (a version of Proposition \ref{julian-prop} that works over $\BK$ instead of $K$) shows that $\CX'$ is the normalized base change $\CX_\BK$. By Remark \ref{rem-potentially-semistable}, $\CX$ is potentially semistable.

\item In particular, we see that a model $\CX$ of $\BP^1_K$ is potentially semistable if the inverse image of $\{\xi\in(\BP^1_K)^{\an}\mid v_\xi\in V(\CX)\}$ in $(\BP_\BK^1)^{\an}$ is the vertex set of a tree. Note that for this to hold it is not sufficient that $\{\xi\in(\BP^1_K)^{\an}\mid v_\xi\in V(\CX)\}$ be the vertex set of a tree, since elements of this set may split into several points in $(\BP^1_\BK)^{\an}$ (see \cite[Remark 5.8]{diss} for a concrete example). However, this need not concern us much, since the MCLF package (\cite{mclf}) has an inbuilt function \texttt{permanent\_completion} that refines a given model of $\BP^1_K$ to a potentially semistable model. We refer to the documentation of said function for more information.
\end{enumerate}
\end{Rem}

Next, let us suppose that we are given a covering $\phi\colon Y\to\BP_K^1$. We say that a model $\CX$ of $\BP_K^1$ is \emph{$\phi$-semistable} if the normalization of $\CX$ in the function field $K(Y)$ is a semistable model. We say that $\CX$ is \emph{potentially $\phi$-semistable} if this normalization is potentially semistable.

Given an $\CO_K$-model $\CX$ of $\BP^1_K$, there is a \emph{specialization map} associating to each closed point on $\BP^1_K$ a closed point on the special fiber $\CX_s$. It may be defined as follows: By the Valuative Criterion of Properness, a closed point $\Spec L\to\CX$ (where $L/K$ is a finite extension) extends to a morphism $\Spec\CO_L\to\CX$; the specialization is the image of the closed point in $\CX$ of the closed point of $\Spec\CO_L$.

\begin{Def}
Let $S\subset\BP_K^1$ be a finite set of closed points. A model $\CX$ of $\BP_K^1$ is said to \emph{separate the set $S$} if for every finite extension $L/K$, the points contained in $S_L\subset\BP_L^1$ specialize to pairwise distinct points on $(\CX_L)_s$.
\end{Def}

Given any finite set $S$ of closed points, a model separating $S$ exists. For this, one takes the tree in $(\BP_\BK^1)^{\an}$ spanned by the set $S_\BK$ and an arbitrary Type II point. It is a skeleton of $\BP_\BK^1$. By taking the images in $\BP_K^1$ of all Type II vertices of this skeleton, we obtain via \mbox{Proposition \ref{julian-prop}} a model of $\BP_K^1$ that separates the set $S$. We refer to \cite[\mbox{Sections 3--4}]{bouw-wewers} for more background on describing a model separating a given set of closed points.

\begin{Rem}\label{rem-admissible-not-enough}
If $p=\characteristic(\kappa)>\deg(\phi)$, it can be shown that every model separating the branch locus of $\phi$ is potentially $\phi$-semistable. A self-contained proof of this fact may be found in \cite[\mbox{Section 3}]{helminck}. As mentioned in the introduction, it is the basis of many works on semistable reduction of curves. In the sequel, we will consider the case $p=\deg(\phi)$, where it is not in general true that models separating the branch locus of $\phi$ are potentially $\phi$-semistable. We will explain how one can proceed in the case that $p=3$ and $\phi$ is a degree-$3$ morphism from a plane quartic curve.
\end{Rem}

\section{The different function}\label{sec-delta}

\noindent In \cite{ctt}, Cohen, Temkin, and Trushin associate a \emph{different function} $\delta^{\ctt}_\phi$ to a given covering $\phi\colon\rY\to\rX$ of analytic curves over an algebraically closed complete non-archimedean field. It is a function on $\rY$ that measures the wildness of the ramification of $\phi$. In \mbox{Section \ref{sec-interior-locus}}, we will see how to explicitly compute it on certain intervals. We apply the concept of the different to a given degree-$p$ covering $\phi\colon Y\to\BP_K^1$ of algebraic $K$-curves; it is then a function
\begin{equation*}
\delta_\phi^{\ctt}\colon Y_{\BK }^{\an}\to[0,1].
\end{equation*}
In fact, since we prefer working with additive valuations instead of absolute values, we also use a rescaled logarithm of the function $\delta^{\ctt}_\phi$. Denoting by $\delta_\phi\colon Y_{\BK }^{\an}\to[0,\infty]$ the different function we will use, we have the defining relations
\begin{equation*}
\delta_\phi^{\ctt}=\exp\big(-\frac{p-1}{p}\delta_\phi\Big),\qquad\delta_\phi=-\frac{p}{p-1}\log(\delta_\phi^{\ctt}).
\end{equation*}
Since $\phi$ is assumed to be of degree $p$, for every point $\xi\in(\BP^1_{\BK})^{\an}$ one of the following is true (\cite[\mbox{Section 4.1.1}]{ctt}):
\begin{enumerate}[(a)]
\item $\delta_\phi(\eta)=0$ for every point $\eta\in\phi^{-1}(\xi)$
\item The fiber $\phi^{-1}(\xi)$ contains only a single point
\end{enumerate}
Thus it makes sense to regard $\delta_\phi$ not only as a function on $Y^{\an}_{\BK }$, but also on $(\BP^1_{\BK})^{\an}$, by simply declaring that $\delta_\phi(\xi)\coloneqq\delta_\phi(\eta)$, whenever $\eta\in\phi^{-1}(\xi)$. Finally, consider the base change morphism
\begin{equation*}
\pi\colon Y_\BK^{\an}\to Y^{\an}.
\end{equation*}
The different function is constant on fibers of $\pi$. Indeed, given $\eta\in Y^{\an}$, the absolute Galois group of $K$ acts transitively on elements of the fiber $\eta'\in\pi^{-1}(\eta)$ (\cite[Corollary 1.3.6]{berkovich}). Since $\delta_\phi(\eta')$ depends only on the extension of valued fields $\CH(\eta')/\CH(\phi(\eta'))$ (cf.\ \mbox{Section \ref{sec-interior-locus}} below), we get the desired constancy.

Thus we will also consider $\delta_\phi$ as a function on $Y^{\an}$, by defining $\delta_\phi(\eta)$ to be the constant value of $\delta_\phi$ on $\pi^{-1}(\eta)$. Similarly we regard $\delta_\phi$ as a function on $(\BP_K^1)^{\an}$ as well.

A \emph{branch} at a point $\xi$ of an analytic curve is an equivalence class of germs of intervals starting at $\xi$. The function $\delta_\phi^{\ctt}$ is \emph{piecewise monomial} (\cite[\mbox{Corollary 4.1.8}]{ctt}), hence the function $\delta_\phi$ we use is \emph{piecewise affine}. See \cite[\mbox{Section 2.1}]{diss} and in particular \cite[\mbox{Remark 2.5}]{diss} for a more detailed discussion of this. For the present article, it is only important to know that we may associate a \emph{slope} $\slope_v(\delta_\phi)$ to any branch $v$ at a Type II point $\eta\in Y^{\an}_\BK$. Concretely, if $v$ is represented by the interval $[\eta,\eta']$, we choose a \emph{radius parametrization}
\begin{equation*}
[\eta,\eta']\to[a,b]\subset\BR
\end{equation*}
(\cite[\mbox{Section 3.6.2}]{ctt}, \cite[\mbox{Definition 1.40}]{diss}). Under this parametrization, $\delta_\phi$ corresponds to a function of the form $t\mapsto\alpha t+\beta$; here $\alpha\in\BQ$ is the slope $\slope_v(\delta_\phi)$.

\subsection*{Topological ramification} Let $\xi\in(\BP^1_{\BK})^{\an}$ and $\eta\in Y^{\an}_{\BK }$ be points with $\phi(\eta)=\xi$.

\begin{Def}\label{def-topological-ramification-index}
The \emph{topological ramification index} of $\phi$ at $\eta$ is the degree
\begin{equation*}
n_\eta\coloneqq[\CH(\eta):\CH(\xi)].
\end{equation*}
\end{Def}

\begin{Rem}\label{rem-geometric-ramification-index-type-ii}
If $\xi$ and $\eta$ are of Type II, then we have $n_\eta=[\kappa(\eta):\kappa(\xi)]$. Indeed, by \cite[\mbox{Theorem 6.3.1(iii)}]{temkin}, $\CH(\xi)$ is \emph{stable}, meaning that
\begin{equation*}
n_\eta=[\kappa(\eta):\kappa(\xi)]\cdot[\Gamma_{v_\eta}:\Gamma_{v_\xi}],
\end{equation*}
where $\Gamma_{v_\eta}$ and $\Gamma_{v_\xi}$ denote the value groups of $v_\eta$ and $v_\xi$ respectively.
But since $\eta$ and $\xi$ are \mbox{Type II}, both $\Gamma_{v_\eta}$ and $\Gamma_{v_\xi}$ equal the value group $\Gamma_\BK$ of $v_K\colon\BK\to\BR\cup\{\infty\}$. Thus $[\Gamma_{v_\eta}:\Gamma_{v_\xi}]=1$.
\end{Rem}

\begin{Def}
If $n_\eta>1$, then $\eta$ is called a \emph{topological ramification point} of $\phi$ and $\xi=\phi(\eta)$ is called a \emph{topological branch point}.

If $n_\eta$ equals $p$, then $\eta$ is called a \emph{wild topological ramification point} of $\phi$ and $\xi$ is called a \emph{wild topological branch point}. If $\eta$ is a topological ramification point, but not a wild topological ramification point, then it is called a \emph{tame topological ramification point} and $\xi$ a \emph{tame topological branch point}.
\end{Def}

\begin{Rem}\label{rem-top-ram-closed}
\begin{enumerate}[(a)]
\item The function $\eta\mapsto n_\eta$ is upper semicontinuous with at most finitely many discontinuity points on a given interval (\cite[\mbox{Lemma 3.6.10}]{ctt}). In particular, the topological ramification locus and the wild topological ramification locus associated to $\phi$ are closed. Likewise, the (wild) topological branch locus, which is the image of the (wild) topological ramification locus under the finite morphism $\phi$, is closed.
\item Part (a) implies that if $v$ is a branch at $\eta\in Y^{\an}_\BK$ represented by the path $[\eta,\eta']$, then the topological ramification index is constant on $(\eta,\eta'']$ with $\eta''$ chosen sufficiently close to $\eta$. Thus it makes sense to associate a topological ramification index to $v$, which we denote $n_v$.
\item For Type II points $\xi\in(\BP_\BK^1)^{\an}$, $\eta\in Y_\BK^{\an}$ with $\phi(\eta)=\xi$, \cite[\mbox{Lemma 4.2.2}]{ctt} shows that $\delta_\phi(\eta)>0$ if and only if the extension of completed residue fields $\CH(\eta)/\CH(\xi)$ is wildly ramified. In particular, we have $\phi^{-1}(\xi)=\{\eta\}$ if $\delta_\phi(\eta)>0$. If $\delta_\phi(\eta)=0$ and $\eta$ is a wild topological ramification point, then the extension $\CH(\eta)/\CH(\xi)$ is unramified of degree $p$.

It is instructive to interpret this in terms of special fibers of certain models of the curves $\BP^1_\BK$ and $Y_\BK$. Suppose that $v_\xi$ corresponds to a component $Z$ of a model of $\BP^1_\BK$ and $v_\eta$ corresponds to a component $W$ of a model of $Y_\BK$. If $\delta_\phi(\eta)>0$, then $W$ is the only component mapping to $Z$, and the corresponding extension of function fields is purely inseparable. In case $\eta$ is a wild topological ramification point with $\delta_\phi(\eta)=0$, we still have that $W$ is the only component above $Z$, but the induced map $W\to Z$ is now generically \'etale of degree $p$. 
\end{enumerate}
\end{Rem}

Let $x_0\colon\Spec\BK\to\BP_K^1\setminus\{\infty\}$ be a $\BK$-valued point that is not among the branch points of the induced map $Y_\BK\to\BP^1_\BK$. We may regard $x_0$ as a point on $(\BP^1_{\BK})^{\an}$. In the following we will often use the following notation: For $r\in\BQ$, we denote by $\xi_r$ the boundary point of $\rD[x_0,r]\subset(\BP^1_\BK)^{\an}$, the closed disk of radius $r$ centered at $x_0$. It corresponds to the valuation
\begin{equation*}
v_r\coloneqq v_{\xi_r}\colon\BK(x)\to\BR\cup\{\infty\},\qquad\sum_{i=0}^ka_i(x-x_0)^i\mapsto\min_{i\ge0}\big\{v_K(a_i)+ir\big\}.
\end{equation*}
The definition of $\xi_r$ depends on the $\BK$-valued point $x_0$, though the notation does not reflect this. Since we will usually keep a choice of $x_0$ fixed, this should cause no confusion.

For the following definition, we denote by $\Gamma_{0,\BK}$ the tree in $(\BP^1_\BK)^{\an}$ spanned by $\infty$ and the branch points of the map $Y_\BK\to\BP^1_\BK$. Then $x_0$ is a closed point not on $\Gamma_{0,\BK}$. The connected component of $(\BP_\BK^1)^{\an}\setminus\Gamma_{0,\BK}$ containing $x_0$ is an open disk.

\begin{Def}
We denote by $\mu(x_0)$ the radius of this disk.
\end{Def}

\begin{Def}
Let again $x_0$ be a $\BK$-valued point as above. We associate to $x_0$ another quantity $\lambda(x_0)$ depending on the given covering $\phi\colon Y\to\BP_K^1$, namely
\begin{equation*}
\lambda(x_0)=\min\big\{r\ge\mu(x_0)\mid\delta_\phi(\xi_r)=0\big\}.
\end{equation*}
Note that this minimum is well-defined since $\delta_\phi(\xi_r)=0$ for $r$ large enough by \cite[\mbox{Theorem 4.6.4}]{ctt}.
\end{Def}

By definition, we have $\lambda(x_0)\ge\mu(x_0)$, with equality if and only if $\delta_\phi(\xi_{\mu(x_0)})=0$. Another way to interpret $\lambda(x_0)$ and $\mu(x_0)$ is to note that the largest open disk centered at $x_0$ disjoint from $\Gamma_{0,\BK}$ has radius $\mu(x_0)$; and the largest open disk centered at $x_0$ disjoint from both $\Gamma_{0,\BK}$ and the wild topological branch locus of $\phi$ has radius $\lambda(x_0)$.

In \cite[\mbox{Section 2.5}]{diss}, $\lambda$ is defined slightly differently and as a function on all of $(\BP_{\BK }^1)^{\an}$. It is shown in \cite[\mbox{Proposition 2.31}]{diss} that the two definitions coincide.

\section{Plane quartic curves}\label{sec-quartics}

\noindent From now on and for the rest of this article we assume that the residue characteristic $p=\characteristic(\kappa)$ equals $3$. We begin by deriving a normal form for plane quartics over $K$ (as usual, assumed to be smooth and geometrically irreducible) with a known rational point. Let $Y$ be such a curve; by definition, it is cut out of $\BP^2_K$ by a homogeneous polynomial
\begin{equation*}
\sum_{i+j+k=4}a_{i,j,k}x^iy^jz^k,\qquad a_{i,j,k}\in K.
\end{equation*}
Without loss of generality we assume that the given rational point is the point at infinity $[0:1:0]$. Then we must have $a_{0,4,0}=0$, and $Y$ being smooth at $[0:1:0]$ implies $(a_{1,3,0}x+a_{0,3,1}z)\ne0$. We may assume that $a_{1,3,0}=0$ and $a_{0,3,1}\ne0$. Indeed, if $a_{0,3,1}\ne0$ we may replace $z$ with $z-\frac{a_{1,3,0}}{a_{0,3,1}}x$ to eliminate the $xy^3$-term. And if $a_{0,3,1}=0$, but $a_{1,3,0}\ne0$, we can simply swap $x$ and $z$.

Dehomogenizing by putting $z=1$, we see that an affine chart of $Y$
\begin{equation*}
\Spec K[x,y]/(F)\subset\BA_K^2
\end{equation*}
is cut out by a polynomial of the form
\begin{equation}\label{equ-quartic-normal-form}
F=y^3 + A_2y^2 + A_1y + A_0,
\end{equation}
where $A_2,A_1,A_0\in K[x]$ are polynomials of degree at most $2$, at most $3$, and at most $4$ respectively. In fact, in case $\characteristic(K)\ne3$, it is possible to eliminate the term $A_2y^2$, as is done for the normal form of \cite[\mbox{Section 1}]{shioda}, but we do not need this. The function field $K(Y)$ of $Y$ is a cubic extension of the rational function field $K(x)$, whose generator $y$ has minimal polynomial $F$. The corresponding morphism 
\begin{equation*}
\phi\colon Y\to\BP_K^1
\end{equation*}
is a degree-$3$ covering of the projective line. Denoting by $\Delta_F$ the discriminant of $F$, the morphism $\phi$ is \'etale over $U\coloneqq D(\Delta_F)\subseteq\BA_K^1$. We note that smoothness of $Y$ implies $\Delta_F\ne0$, and so $\phi$ is generically \'etale. We write $S\coloneqq\CO_Y(\phi^{-1}(U))$ (the localization of $K[x,y]/(F)$ at $\Delta_F$).

In the present situation, there is a simple formula for the quantity $\mu(x_0)$ associated to the covering $\phi$ and a $\BK$-valued point $x_0\colon\Spec\BK\to U$. Writing
\begin{equation*}
d_i\coloneqq\frac{\Delta_F^{(i)}}{i!},\qquad i\ge0
\end{equation*}
(where $\Delta_F^{(i)}$ denotes the $i$-th derivative of the polynomial $\Delta_F$), it is an easy consequence of the theory of the Newton polygon that
\begin{equation*}
\mu(x_0)=\max_{i\ge1}\big\{\frac{\hat{d}_0(x_0)-\hat{d}_i(x_0)}{i}\big\}.
\end{equation*}
Here $\hat{d}_i$ denotes the valuative function associated to $d_i$ (Definition \ref{def-valuative-functions}). We refer to \cite[\mbox{Lemma 2.14}]{diss} for details.

To also determine the quantity $\lambda(x_0)$ associated to the $\BK$-valued point $x_0$, it will be necessary to modify the equation for $Y$. To be precise, we will switch to a coordinate system where a given point $p_0\in\phi^{-1}(x_0)$ has the coordinates $(0,0)$; the resulting equation for $Y$ is \eqref{equ-desired-form} below. To keep track of how the coefficients of this new equation vary as a function of $p_0$, we begin by replacing the coefficients of $F$ by their Taylor expansions. These are polynomials in a new variable $t$ with coefficients in $K[x]$:
\begin{equation*}
\tilde{A}_2=\sum_{l=0}^2\frac{A_2^{(l)}}{l!}t^l,\qquad\tilde{A}_1=\sum_{l=0}^3\frac{A_1^{(l)}}{l!}t^l,\qquad\tilde{A}_0=\sum_{l=0}^4\frac{A_0^{(l)}}{l!}t^l.
\end{equation*}
Next, we introduce a second new variable $T$. With the goal of obtaining an equation for $Y$ for which the constant coefficient has terms only in degrees $2,3,4$ (cf.\ the proof of Lemma \ref{lem-black-box} below), we define
\begin{equation*}
\tilde{H}(T)\coloneqq T^3+\tilde{A}T^2+\tilde{B}T+\tilde{C}\coloneqq\tilde{F}(T+u),\qquad u=y-\frac{y^2A_2'+yA_1'+A_0'}{3y^2+2yA_2+A_1}t,
\end{equation*}
and introduce notation for the coefficients of $\tilde{A},\tilde{B},\tilde{C}$:
\begin{equation}\label{equ-need-for-valuative-functions}
\tilde{A}=\sum_{l=0}^2a_lt^l,\qquad \tilde{B}=\sum_{l=0}^3b_lt^l,\qquad \tilde{C}=\sum_{l=0}^4c_lt^l
\end{equation}
Since the denominator appearing in the definition of the function $u$ is just the derivative of $F$, it follows that $u$, and therefore also all the functions $a_l,b_l,c_l$, are regular functions on $\Spec S$.

We now fix a $\BK $-valued point $p_0\colon\Spec\BK \to\Spec S$. By evaluating the coefficients of $\tilde{A},\tilde{B},\tilde{C}$ at $p_0$, we obtain a polynomial 
\begin{equation}\label{equ-desired-form}
H(T)\coloneqq T^3+AT^2+BT+C \coloneqq T^3+\Big(\sum_{l=0}^2a_l(p_0)t^l\Big)T^2+\Big(\sum_{l=0}^3b_l(p_0)t^l\Big)T+\sum_{l=0}^4c_l(p_0)t^l,
\end{equation}
whose coefficients $A,B,C$ lie in $\BK[t]$. Since $F$ is geometrically irreducible, $H$ is irreducible as well. Let us denote by $w\in \BK(Y)$ a zero of $H$. It is a generator of the field extension $\BK (Y)/\BK (x)$.

\mbox{Lemma \ref{lem-black-box}} below summarizes what we know about the generator $w$ in case $\lambda(x_0)>\mu(x_0)$, where $x_0=\phi(p_0)$. We first need to introduce some notation. For $r\ge\mu(x_0)$, we denote as before by $\xi_r$ the boundary point of the disk $\rD[x_0,r]$ of radius $r$ centered at $x_0$. It follows from the definition of $\lambda(x_0)$ that $\delta_\phi(\xi_r)>0$ for $\mu(x_0)<r<\lambda(x_0)$. Because the wild topological ramification locus is closed, $\xi_{\lambda(x_0)}$ is a wild topological branch point; we denote its unique preimage in $Y^{\an}_{\BK }$ by $\eta$.

The assumption $\lambda(x_0)>\mu(x_0)$ is crucial: Without it, we could not infer that $\xi_{\lambda(x_0)}$ is a wild topological branch point, and ultimately this is what guarantees a reduction curve of Artin--Schreier type \eqref{equ-artin-schreier-polynomial}. We refer to \cite[\mbox{Section 3.3}]{diss} for more details.

\begin{Lem}\label{lem-black-box}
Suppose that $p_0$ is a $\BK $-valued point in $\Spec S$ whose image $x_0=\phi(p_0)$ satisfies $\lambda(x_0)>\mu(x_0)$. Then the following are true:
\begin{enumerate}[(a)]
\item \begin{equation*}
\lambda(x_0)=\max_{k\in\{2,3,4\}}\big\{\frac{3\hat{b}_0(p_0)-2\hat{c}_k(p_0)}{2k}\big\}
\end{equation*}
\item The reduction $\overline{w}\coloneqq\overline{w\pi^{-v_\eta(w)}}$ is a generator of the field extension of residue fields $\kappa(\eta)/\kappa(\xi_{\lambda(x_0)})$
\item The minimal polynomial of $\overline{w}$ is the Artin--Schreier polynomial
\begin{equation}\label{equ-artin-schreier-polynomial}
T^3+\overline{b_0(p_0)\pi^{-2s}}T+\overline{C\pi^{-3s}},
\end{equation}
where $s=v_\eta(w)=\frac{v_\eta(C)}{3}$
\item For $k\in\{2,3,4\}$, the maximum in (a) is achieved for $k$ if and only if $\overline{C\pi^{-3s}}$ has non-vanishing degree-$k$ coefficient
\end{enumerate}
\end{Lem}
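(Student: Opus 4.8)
The plan is to read off (d) from parts (a) and (c) by computing the reduction $\overline{C\pi^{-3s}}$ explicitly. Write $\lambda\coloneqq\lambda(x_0)$. First I would recall why the constant coefficient of $H$ involves only the powers $t^2,t^3,t^4$, i.e.\ $c_0=c_1=0$ in \eqref{equ-desired-form}: one has $c_0=\tilde F(u)|_{t=0}=F(y)=0$ since $y$ is a root of $F$, while the coefficient of $t^1$ in $\tilde F(u)$ equals $(\partial_y F)\cdot(-\partial_x F/\partial_y F)+\partial_x F=0$ by the choice $u=y-\frac{\partial_x F}{\partial_y F}\,t$. Hence $C=c_2(p_0)t^2+c_3(p_0)t^3+c_4(p_0)t^4$, so the Gauss valuation of $C$ is a minimum over $k\in\{2,3,4\}$ only.

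\medskip
\noindent The heart of the argument is a direct computation of $v_\eta(C)$ and of the reduction $\overline{C\pi^{-3s}}$. Since $\phi(\eta)=\xi_\lambda$, the valuation $v_\eta$ restricts on $\BK(x)$ to $v_{\xi_\lambda}$, the Gauss valuation of the disk $\rD[x_0,\lambda]$; evaluating it on $C$ and using $\hat{c}_k(p_0)=v_K(c_k(p_0))$ gives
\begin{equation*}
v_\eta(C)=\min_{k\in\{2,3,4\}}\big\{\hat{c}_k(p_0)+k\lambda\big\},
\end{equation*}
which by part (c) equals $3s$. Writing
\begin{equation*}
C\pi^{-3s}=\sum_{k=2}^{4}\big(c_k(p_0)\,\pi^{-\hat{c}_k(p_0)}\big)\,\big(\pi^{-\lambda}t\big)^{k}\,\pi^{\,\hat{c}_k(p_0)+k\lambda-3s}
\end{equation*}
exhibits the $k$-th summand as a $v_K$-unit, times a $k$-th power of $\pi^{-\lambda}t$ --- which has $v_{\xi_\lambda}$-value $0$ and residue class $\bar t$ transcendental over the residue field of $\BK$ --- times $\pi^{\hat{c}_k(p_0)+k\lambda-3s}$, of non-negative valuation because $3s$ is the minimum. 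Thus the $k$-th summand survives in $\kappa(\xi_\lambda)$ exactly when $\hat{c}_k(p_0)+k\lambda=3s$, and since the surviving summands are non-zero scalar multiples of the pairwise distinct monomials $\bar t^2,\bar t^3,\bar t^4$, no cancellation can occur. Therefore the degree-$k$ coefficient of $\overline{C\pi^{-3s}}$ is non-zero if and only if $\hat{c}_k(p_0)+k\lambda$ attains the minimum $3s=\min_j\{\hat{c}_j(p_0)+j\lambda\}$.

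\medskip
\noindent It then remains to match this with the maximum in (a). By (a) one has $\lambda=\max_{k\in\{2,3,4\}}\frac{3\hat{b}_0(p_0)-2\hat{c}_k(p_0)}{2k}$, which is equivalent to $\hat{c}_k(p_0)+k\lambda\ge\tfrac{3}{2}\hat{b}_0(p_0)$ for every $k$, with equality precisely for the $k$ realizing the maximum. Since this common lower bound is attained, the minimum over $k$ equals $\tfrac{3}{2}\hat{b}_0(p_0)$, so $\hat{c}_k(p_0)+k\lambda$ is minimal if and only if it equals $\tfrac{3}{2}\hat{b}_0(p_0)$, i.e.\ if and only if the maximum in (a) is attained at $k$. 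Combined with the previous step, this proves (d). The two steps I expect to require the most care are the identification of $v_\eta(C)$ with the Gauss-valuation formula --- which is exactly where $c_0=c_1=0$ enters, ensuring the minimum runs only over $\{2,3,4\}$ --- and the no-cancellation observation in the reduction; the rest is bookkeeping. (As a byproduct one gets $3s=\tfrac{3}{2}\hat{b}_0(p_0)$, consistent with the $T$-coefficient $\overline{b_0(p_0)\pi^{-2s}}$ of the Artin--Schreier polynomial in (c) being a unit.)
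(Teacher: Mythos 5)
Your argument for part (d) is correct, and it is genuinely different from what the paper does. The paper's proof of the lemma consists of verifying that $C$ has vanishing constant and degree-$1$ coefficients (so that \cite[Assumption 3.23]{diss} is met with $m=2$) and then citing parts (a)--(d) directly to \cite[Theorem 3.30, Lemma 3.27, Remarks 3.28 and 3.31]{diss}; no explicit argument for (d) is given. You instead \emph{derive} (d) from (a) and (c) by a self-contained Gauss-valuation computation: since $v_\eta$ restricts to $v_{\xi_{\lambda(x_0)}}$ on $\BK(x)$ and $c_0=c_1=0$, you get $3s=v_\eta(C)=\min_{k\in\{2,3,4\}}\{\hat{c}_k(p_0)+k\lambda(x_0)\}$; the degree-$k$ coefficient of $\overline{C\pi^{-3s}}$ survives reduction exactly when $\hat{c}_k(p_0)+k\lambda(x_0)$ attains that minimum (with no possible cancellation across $k$, since the monomials $\bar t^2,\bar t^3,\bar t^4$ are independent); and the rearrangement $\lambda(x_0)\ge\frac{3\hat{b}_0(p_0)-2\hat{c}_k(p_0)}{2k}\iff\hat{c}_k(p_0)+k\lambda(x_0)\ge\tfrac{3}{2}\hat{b}_0(p_0)$ identifies minimizers of $\hat{c}_k+k\lambda$ with maximizers in (a). All of this checks out; the byproduct $2s=\hat b_0(p_0)$ agreeing with the $T$-coefficient of the Artin--Schreier polynomial in (c) being a unit is a good sanity check. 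The main caveat is one of scope: you address only (d), taking (a) and (c) as given, whereas the lemma as a whole also asserts (a), (b), (c); for those the paper still relies on the cited thesis results (which is fine, but means your proposal is not a full replacement for the paper's proof). Within its stated scope, though, your argument is sound and has the advantage of being explicit and checkable in place of a citation.
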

\begin{proof}
The key point is that by construction, the polynomial $C$ is of degree $\le4$ with vanishing constant and degree-$1$ terms. To wit, the constant coefficient of $C$ is
\begin{equation*}
y(p_0)^3+A_2(p_0)y(p_0)^2+A_1(p_0)y(p_0)+A_0(p_0)=0.   
\end{equation*}
And writing
\begin{equation*}
v\coloneqq\frac{y(p_0)^2A_2'(p_0)+y(p_0)A_1'(p_0)+A_0'(p_0)}{3y(p_0)^2+2y(p_0)A_2(p_0)+A_1(p_0)},
\end{equation*}
the degree-$1$ coefficient of $C$ is
\begin{equation*}
y(p_0)^2A_2'(p_0)+y(p_0)A_1'(p_0)+A_0'(p_0)-v\cdot(3y(p_0)^2+2y(p_0)A_2(p_0)+A_1(p_0))=0.    
\end{equation*}
Thus $w$ satisfies Assumption 3.23 of \cite[\mbox{Section 3.3}]{diss} for $m=2$. 

Now everything follows from general results proved in the first three chapters of \cite{diss}. As mentioned in the introduction, the results of these chapters will also appear in a separate article \cite{ossenwewers}.

Part (a) follows from \cite[\mbox{Theorem 3.30}]{diss}, (b) follows from \cite[\mbox{Lemma 3.27}]{diss}, and (d) is part of \cite[\mbox{Remark 3.31}]{diss}. According to \cite[\mbox{Remark 3.28}]{diss}, the minimal polynomial of $\overline{w}$ is
\begin{equation*}
T^3+\overline{A\pi^{-s}}T^2+\overline{B\pi^{-2s}}T+\overline{C\pi^{-3s}}   
\end{equation*}
and has constant discriminant. The latter easily implies that $\overline{A\pi^{-s}}=0$ and that $\overline{B\pi^{-2s}}$ is a constant (see for example \cite[\mbox{Lemma 4.3}]{diss}).
\end{proof}

\begin{Rem}\label{rem-read-off-genus}
The genus of the smooth $\kappa$-curve defined by the equation \eqref{equ-artin-schreier-polynomial} can be read off from the polynomial 
\begin{equation*}
\overline{C\pi^{-3v_\eta(w)}}=\overline{c}_4\overline{t}^4+\overline{c}_3\overline{t}^3+\overline{c}_2\overline{t}^2
\end{equation*}
(where $\overline{t}=\overline{t\pi^{-v_\xi(t)}}$, and where $\overline{c}_4,\overline{c}_3,\overline{c}_2$ are certain constants). Indeed, it follows from \cite[\mbox{Theorem 3.7.8(d)}]{stichtenoth} that \eqref{equ-artin-schreier-polynomial} defines a curve of genus $3$ if $\overline{c}_4\ne0$; it defines a curve of genus $1$ if $\overline{c}_4=0$, but $\overline{c}_2\ne0$; and it defines a curve of genus $0$ otherwise.

\end{Rem}

\section{Tame locus}\label{sec-tame-locus}

\noindent Let $Y$ be a plane quartic curve in the normal form \eqref{equ-quartic-normal-form} discussed in the previous section and let $\phi\colon Y\to\BP_K^1$ be the associated degree-$3$ covering. As mentioned in \mbox{Remark \ref{rem-admissible-not-enough}}, a model of $\BP_K^1$ separating the branch locus of $\phi$ is not usually potentially $\phi$-semistable. The following lemma explains how to fix this. 

As before, we denote by $\Gamma_{0,\BK}$ the tree in $(\BP^1_\BK)^{\an}$ spanned by the branch locus of $Y_\BK\to\BP^1_\BK$ and the point $\infty$. We denote the image of $\Gamma_{0,\BK}$ in $(\BP^1_K)^{\an}$ by $\Gamma_0$.

\begin{Lem}\label{lem-tails}
Write $\Sigma_{0,\BK}\coloneqq\phi^{-1}(\Gamma_{0,\BK})$. Then the following statements hold:
\begin{enumerate}[(a)]
\item For every point $\eta\in Y^{\an}_\BK\setminus\Sigma_{0,\BK}$ of positive genus, there exists a unique path $[\eta,\eta_1]\subset Y_\BK^{\an}$ with $[\eta,\eta_1]\cap\Sigma_{0,\BK}=\{\eta_1\}$
\item The function $\delta_\phi$ is strictly positive on $(\eta,\eta_1]$ for each such path
\item Attaching all the paths $[\eta,\eta_1]$ to $\Sigma_{0,\BK}$ and adding to the vertex set of $\Sigma_{0,\BK}$ all points of positive genus on $\Sigma_{0,\BK}$ yields a skeleton of $Y_\BK$, which is the unique minimal skeleton containing $\Sigma_{0,\BK}$
\end{enumerate}
\end{Lem}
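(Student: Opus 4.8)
The plan is to reduce everything to the structure of $\phi$ over the open residue disks of $(\BP^1_\BK)^{\an}\setminus\Gamma_{0,\BK}$, use the facts about $\delta_\phi$ recalled in Section~\ref{sec-delta}, and then assemble (a) and (b) into the skeleton statement (c). Write $(\BP^1_\BK)^{\an}\setminus\Gamma_{0,\BK}=\bigsqcup_\alpha D_\alpha$ with each $D_\alpha$ an open disk having a single boundary point $\xi_\alpha\in\Gamma_{0,\BK}$; since $\Gamma_{0,\BK}$ contains the branch locus of $Y_\BK\to\BP^1_\BK$, the restriction $\phi^{-1}(D_\alpha)\to D_\alpha$ is finite étale of degree $3$, and as a finite étale cover of an open disk of degree prime to $p$ is trivial, a standard argument with its Galois closure (a connected degree-$2$ étale cover of $D_\alpha$ would split) shows that $\phi^{-1}(D_\alpha)$ is either a disjoint union of three open disks mapping isomorphically to $D_\alpha$ or a single connected $\BZ/3\BZ$-Galois cover; in particular $n_{\eta'}\in\{1,3\}$ for every $\eta'\in\phi^{-1}(D_\alpha)$. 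Because $Y$ is geometrically irreducible, $Y^{\an}_\BK$ is connected; since it is obtained from $\Sigma_{0,\BK}=\phi^{-1}(\Gamma_{0,\BK})$ by gluing the $\phi^{-1}(D_\alpha)$ along $\phi^{-1}(\xi_\alpha)$, it follows that $\Sigma_{0,\BK}$ is connected and that each component of $\phi^{-1}(D_\alpha)$ meets $\Sigma_{0,\BK}$ in exactly one point, lying over $\xi_\alpha$; thus $Y^{\an}_\BK\setminus\Sigma_{0,\BK}$ is the disjoint union of the components of the $\phi^{-1}(D_\alpha)$.

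I would then analyse a fixed $D_\alpha$. For a $\BK$-valued point $x_0\in D_\alpha$ one has $\mu(x_0)=\mathrm{radius}(D_\alpha)$ and $\lambda(x_0)>\mu(x_0)$ if and only if $\delta_\phi(\xi_\alpha)>0$; combining this with the closedness of the wild topological branch locus (Remark~\ref{rem-top-ram-closed}) shows that along every radius $\{\xi_r\}_{r\ge\mu(x_0)}$ one has $\delta_\phi(\xi_r)>0$ for $\mu(x_0)\le r<\lambda(x_0)$ and $\delta_\phi(\xi_r)=0$ for $r\ge\lambda(x_0)$. As every Type~II point of $D_\alpha$ lies on such a radius, $E_\alpha:=\{\xi\in\overline{D_\alpha}\mid\delta_\phi(\xi)>0\}$ is either empty (exactly when $\delta_\phi(\xi_\alpha)=0$) or a finite subtree star-shaped about $\xi_\alpha$ whose leaves are points $\xi_{\lambda(x_0)}$; finiteness uses that $\delta_\phi$ is piecewise affine. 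Over $\overline{E_\alpha}$ where $\delta_\phi>0$ each fibre of $\phi$ is a single point, so $\phi$ restricts to a homeomorphism $\widetilde E_\alpha:=\phi^{-1}(\overline{E_\alpha})\xrightarrow{\ \sim\ }\overline{E_\alpha}$. Finally, over any open subset of $D_\alpha$ on which $\delta_\phi$ vanishes $\phi$ is topologically unramified — wild topological ramification there would, via the reduction of $\phi$ whose ramification is reflected in positive values of $\delta_\phi$, force $\delta_\phi>0$, and there is no non-split connected unramified degree-$3$ cover of $\BP^1_\kappa$ — and, being residually trivial, it is there the split covering by three open disks.

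For (a) and (b), let $\eta\in Y^{\an}_\BK\setminus\Sigma_{0,\BK}$ have positive genus and put $\xi:=\phi(\eta)\in D_\alpha$. The reduction curve at $\xi$ is $\BP^1_\kappa$ (a Type~II point of an open disk has genus $0$), and the reduction of $\phi$ near $\eta$ is a degree-$n_\eta$ map onto it; it would force the reduction curve at $\eta$ to be $\BP^1_\kappa$ if $n_\eta=1$ or if $\delta_\phi(\eta)>0$ (then $n_\eta=3$ and, by Remark~\ref{rem-top-ram-closed}, the map is purely inseparable). Since the reduction curve at $\eta$ has positive genus and $n_\eta\in\{1,3\}$, we conclude $\delta_\phi(\eta)=0$, $n_\eta=3$, and $\CH(\eta)/\CH(\xi)$ unramified of degree $3$. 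By the previous paragraph, $\xi$ cannot lie in the interior of $\{\delta_\phi=0\}$ in $D_\alpha$ (there $\phi$ would be topologically unramified, contradicting $n_\eta=3$), so $\xi$ is a leaf $\xi_{\lambda(x_0)}$ of $\overline{E_\alpha}$ with $\lambda(x_0)>\mu(x_0)$ and $\eta$ its unique preimage — precisely the point $\eta$ of Lemma~\ref{lem-black-box}. The unique path from $\xi$ to $\Gamma_{0,\BK}$ in the tree $(\BP^1_\BK)^{\an}$ is $[\xi,\xi_\alpha]\subset\overline{E_\alpha}$, with $\delta_\phi>0$ on $(\xi,\xi_\alpha]$; its image under the inverse of $\widetilde E_\alpha\xrightarrow{\ \sim\ }\overline{E_\alpha}$ is a path $[\eta,\eta_1]$ with $\eta_1=\phi^{-1}(\xi_\alpha)\in\Sigma_{0,\BK}$, we have $[\eta,\eta_1]\cap\Sigma_{0,\BK}=\{\eta_1\}$ (as $[\xi,\xi_\alpha]\cap\Gamma_{0,\BK}=\{\xi_\alpha\}$), uniqueness holds because $\phi$ is a homeomorphism on $\widetilde E_\alpha$, and $\delta_\phi>0$ on $(\eta,\eta_1]$ since $\delta_\phi$ is the pullback of its analogue on $(\BP^1_\BK)^{\an}$. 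This is (a) and (b).

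For (c): $Y^{\an}_\BK$ has only finitely many Type~II points of positive genus (they are vertices of its minimal skeleton), so finitely many paths $[\eta,\eta_1]$ get attached; as $\Sigma_{0,\BK}=\phi^{-1}(\Gamma_{0,\BK})$ is a finite graph, $\Sigma_1$ is again homeomorphic to a finite graph, and by construction all positive-genus Type~II points are among its vertices — so the first two requirements in the definition of a skeleton hold. It remains to see that $Y^{\an}_\BK\setminus\Sigma_1$ is a disjoint union of open disks. Each component of $Y^{\an}_\BK\setminus\Sigma_{0,\BK}$ is a component of some $\phi^{-1}(D_\alpha)$; if $\delta_\phi(\xi_\alpha)=0$ it is already an open disk carrying no tail, and if $\delta_\phi(\xi_\alpha)>0$, deleting the union $\widetilde E_\alpha^{+}$ of the tails towards the positive-genus leaves of $\widetilde E_\alpha$ breaks it into the split-covering disks hanging off the edges and positive-genus leaves of $\widetilde E_\alpha^{+}$ and, for each genus-$0$ leaf of $\widetilde E_\alpha$, a region formed by a pruned sub-branch of $\widetilde E_\alpha$ with its hanging disks; such a region is connected, of genus $0$, simply connected, and has a single boundary point, hence is an open disk by the classification of analytic curves (cf.\ \cite{bpr}). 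Thus $\Sigma_1$ is a skeleton, and it is the minimal one containing $\Sigma_{0,\BK}$: any skeleton containing $\Sigma_{0,\BK}$ is connected and has every positive-genus point as a vertex, hence contains the unique path from each such point to $\Sigma_{0,\BK}$; and cutting any tail from $\Sigma_1$ orphans a positive-genus vertex. The main obstacle is the second paragraph — the star-shapedness and finiteness of $E_\alpha$ and the identification of the positive-genus points with the unique preimages of the leaves $\xi_{\lambda(x_0)}$ (whose Artin--Schreier reductions are governed by Lemma~\ref{lem-black-box} and Remark~\ref{rem-read-off-genus}) — which relies on the finer behaviour of $\delta_\phi$ and its interaction with the reduction of $\phi$ from \cite{ctt} and \cite{diss}; secondarily, the disk decomposition in (c) needs both the triviality of $\phi$ over $\{\delta_\phi=0\}$ and the classification of analytic curves.
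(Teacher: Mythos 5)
Your proposal takes a genuinely different route from the paper, which simply cites the ``genus formula for wide open domains'' \cite[Theorem 6.2.7]{ctt} (essentially the Riemann--Hurwitz formula of \cite[Theorem 4.5.4]{ctt}, used explicitly in the proof of Proposition \ref{prop-potentially-phi-semistable}) and refers to \cite[Propositions 2.22 and 2.24]{diss} for the details. Your approach instead analyses $\phi^{-1}(D_\alpha)\to D_\alpha$ via Galois closures and the fundamental group of $\BP^1_\kappa$. Several of the supporting points are correct and well used --- in particular, the observation that a degree-$3$ \'etale cover of an open disk in residue characteristic $3$ is split or $\BZ/3\BZ$-Galois, hence $n_\eta\in\{1,3\}$, is a nice reduction, and the star-shapedness of $E_\alpha$ along each radius is the interpretation of $\lambda$ recorded after its definition in Section \ref{sec-delta}.

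However, there is a genuine gap at the step you yourself flag as the main obstacle, namely the claim that over any open subset of $D_\alpha$ on which $\delta_\phi$ vanishes, $\phi$ is topologically unramified. Your argument is that wild topological ramification there would produce a non-split connected unramified degree-$3$ cover of $\BP^1_\kappa$. But when $\delta_\phi(\eta)=0$ and $n_\eta=3$, what Remark \ref{rem-top-ram-closed}(c) gives is that $\kappa(\eta)/\kappa(\xi)$ is separable of degree $3$, i.e.\ that $W_\eta\to Z_\xi\cong\BP^1_\kappa$ is \emph{generically} \'etale --- not that it is \'etale as a morphism of curves. A connected degree-$3$ separable cover of $\BP^1_\kappa$ with residue characteristic $3$ may very well exist once one allows (wild) ramification at finitely many closed points, e.g.\ Artin--Schreier covers; your Galois-closure argument rules out ramification index $2$, but not index $3$. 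What is needed to finish is precisely the correspondence between the slopes of $\delta_\phi$ at the branches of $\eta$ and the (wild) ramification of the residual cover $W_\eta\to\BP^1_\kappa$ at the corresponding closed points: if $\delta_\phi\equiv 0$ near $\eta$ then all these slopes vanish, which forces the residual cover to be \'etale, whence split, contradicting $n_\eta=3$. That correspondence is exactly the content of \cite[Theorem 4.5.4]{ctt} (or the genus formula \cite[Theorem 6.2.7]{ctt}) which the paper invokes. So while your proof is organised differently and the reductions you make are sound, it does not ultimately bypass the CTT input the paper relies on, and as written the decisive step is not justified. The same issue is inherited by the disk decomposition in (c), which again depends on the triviality of $\phi$ over $\{\delta_\phi=0\}$.
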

\begin{proof}
This follows from the ``genus formula for wide open domains'', \cite[\mbox{Theorem 6.2.7}]{ctt}, which relates the slopes of the function $\delta_\phi$ to the branch locus of $\phi$ and the points of positive genus in $Y_\BK^{\an}$. A detailed proof may be found in \cite[\mbox{Propositions 2.22} \mbox{and 2.24}]{diss}
\end{proof}

Motivated by the lemma, we denote by $\Gamma_\BK$ the tree in $(\BP_\BK^1)^{\an}$ spanned by the branch locus of $Y_\BK\to\BP_\BK^1$, by the point $\infty$, and by Type II points that are the image of a point of positive genus on $Y_\BK^{\an}$. We denote by $\Gamma$ the image of $\Gamma_\BK$ in $(\BP^1_K)^{\an}$.

\begin{Lem}\label{lem-potentially-phi-semistable}
Let $\CX$ be a potentially semistable model of $\BP_K^1$ supporting all valuations corresponding to a \mbox{Type II} vertex of $\Gamma_0$ or to a \mbox{Type II} point that is the image of a positive genus point on $Y_\BK^{\an}$. Then $\CX$ is potentially $\phi$-semistable.
\end{Lem}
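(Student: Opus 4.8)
The plan is to reduce the statement to a combinatorial claim about skeletons of $Y_\BK$ and then to pull back a suitable skeleton of $\BP^1_\BK$ along $\phi$. Write $\CY$ for the normalization of $\CX$ in $K(Y)$. By Remark~\ref{rem-potentially-semistable} it suffices to show that $\CY_\BK$ is a semistable model of $Y_\BK$, and by Remark~\ref{rem-skeletons-and-models}(a) this holds as soon as the set $\Sigma$ of points of $Y_\BK^{\an}$ whose valuation extends an element of $V(\CY)$ is the vertex set of a skeleton. By Proposition~\ref{julian-prop}(b), $V(\CY)$ consists of the extensions of $V(\CX)$ to $K(Y)$; since $\phi$ commutes with base change to $\BK$ and points of analytifications correspond to valuations, this identifies $\Sigma$ with $\phi_\BK^{-1}(\Sigma')$, where $\phi_\BK\colon Y_\BK^{\an}\to(\BP^1_\BK)^{\an}$ is the analytified base change and $\Sigma'\subset(\BP^1_\BK)^{\an}$ is the set of points whose valuation extends an element of $V(\CX)$.

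Next I would bring in the two hypotheses on $\CX$. Since $\CX$ is potentially semistable, $\CX_\BK$ is a semistable model of $\BP^1_\BK$ (Remark~\ref{rem-potentially-semistable}), and by \cite[Theorem~2]{green} its set of components corresponds to $\Sigma'$; hence $\Sigma'$ is a semistable vertex set and the tree $\Gamma_\CX$ it spans is a skeleton of $\BP^1_\BK$, so $(\BP^1_\BK)^{\an}\setminus\Gamma_\CX$ is a disjoint union of open disks. By assumption $\CX$ supports the valuations of all Type~II vertices of $\Gamma_0$ and of all Type~II points that are images of positive genus points of $Y_\BK^{\an}$; passing to $\BK$, this means $\Sigma'$ contains all Type~II vertices of $\Gamma_{0,\BK}$ and all points $\phi_\BK(\eta)$ with $g(\eta)>0$. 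Consequently $\Gamma_\CX$, being the convex hull of $\Sigma'$, contains all such images and all of $\Gamma_{0,\BK}$ except possibly the open legs running out to the (closed) branch points of $Y_\BK\to\BP^1_\BK$ and to $\infty$. Adjoining these finitely many legs gives $\Gamma'_\CX:=\Gamma_\CX\cup\Gamma_{0,\BK}$, which is again a skeleton of $\BP^1_\BK$: it is still a finite tree (both pieces are convex, so their intersection is connected, and it is nonempty because by Lemma~\ref{lem-tails} some positive genus point of $Y_\BK^{\an}$ has its image on $\Gamma_{0,\BK}$), and attaching a closed leg to a skeleton leaves the complement a disjoint union of open disks. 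Since the Type~II vertices added in passing from $\Gamma_\CX$ to $\Gamma'_\CX$ already lie in $\Sigma'$, the Type~II vertex set of $\Gamma'_\CX$ is exactly $\Sigma'$.

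Finally I would pull $\Gamma'_\CX$ back and show that $\phi_\BK^{-1}(\Gamma'_\CX)$ is a skeleton of $Y_\BK$ with Type~II vertex set $\phi_\BK^{-1}(\Sigma')=\Sigma$. It is a finite graph (as $\Gamma'_\CX$ is and $\phi_\BK$ is finite) and contains every positive genus point of $Y_\BK^{\an}$, since such a point maps into $\Gamma_\CX\subseteq\Gamma'_\CX$. The remaining point is that for every open disk $D$ in $(\BP^1_\BK)^{\an}\setminus\Gamma'_\CX$ the preimage $\phi_\BK^{-1}(D)$ is a disjoint union of open disks. Such a $D$ contains no branch point of $\phi$ (these lie on $\Gamma_{0,\BK}\subseteq\Gamma'_\CX$), so $\phi_\BK$ is finite \'etale over $D$; and $\phi_\BK^{-1}(D)$ contains no positive genus point (such a point would map into $\Gamma_\CX$, which is disjoint from $D$). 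Hence each connected component of $\phi_\BK^{-1}(D)$ is a connected genus-$0$ analytic curve, finite \'etale over the open disk $D$ and with a single end, and examining the topological ramification index of $\phi$ at the boundary point of $D$ (together with the structure of tame and wild Artin--Schreier \'etale covers of an open disk) shows that such a component is itself an open disk. Thus $\phi_\BK^{-1}(\Gamma'_\CX)$ is a skeleton of $Y_\BK$; by the converse in \cite[\mbox{Remark~3.5.2(ii)}]{ctt} its set of Type~II vertices, namely $\Sigma$, is a semistable vertex set, hence the vertex set of a skeleton of $Y_\BK$. By Remark~\ref{rem-skeletons-and-models}(a), $\CY$ is potentially semistable, i.e.\ $\CX$ is potentially $\phi$-semistable.

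The hard part will be the last step, the analysis of $\phi_\BK^{-1}(D)$ in residue characteristic $3$: a priori wild ramification could make this preimage a nontrivial connected cover, and what rescues the argument is precisely the hypothesis that $\CX$ supports the images of all positive genus points, which forces the absence of a positive genus point over $D$ and so reduces the question to the classification of genus-$0$ wide open domains finite over a disk. A secondary technical point to keep in mind is that a point of $(\BP^1_K)^{\an}$ may split into several points after base change to $\BK$ (Remark~\ref{rem-skeletons-and-models}(b)), so one must work throughout with the full sets of extensions $\Sigma,\Sigma'$ over $\BK$ rather than with the models over $K$ directly.
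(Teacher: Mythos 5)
Your proof takes a genuinely different route from the paper's, but it has one real gap and a couple of smaller errors.

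The paper's proof is a two-line combination of Remark~\ref{rem-skeletons-and-models}(a) with Lemma~\ref{lem-tails}(c): the latter already exhibits a skeleton of $Y_\BK$ containing $\Sigma_{0,\BK}$ whose Type~II vertices lie above $\Gamma_{0,\BK}$ or are positive genus points, and one deduces from the hypotheses on $\CX$ that the set $\Sigma$ of extensions of $V(\CY)$ to $\BK(Y)$ contains a semistable vertex set. You instead pull back a skeleton $\Gamma'_\CX=\Gamma_\CX\cup\Gamma_{0,\BK}$ of $\BP^1_\BK$ and try to show directly that $\phi_\BK^{-1}(\Gamma'_\CX)$ is a skeleton of $Y_\BK$. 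The decisive step --- that each connected component of $\phi_\BK^{-1}(D)$, for $D$ an open disk in the complement of $\Gamma'_\CX$, is again an open disk --- is exactly the content of Lemma~\ref{lem-tails}, which the paper proves from the genus formula for wide open domains of \cite{ctt}. You flag this as ``the hard part'' and only sketch it; since you never actually carry out the genus/Riemann--Hurwitz computation and do not cite Lemma~\ref{lem-tails} for it, the argument is incomplete precisely where the substance of the statement lies. Note that if you \emph{do} cite Lemma~\ref{lem-tails}(c), your detour through $\Gamma'_\CX$ becomes largely unnecessary, which is why the paper's proof is so short.

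Two further points. First, your justification that $\Gamma_\CX\cap\Gamma_{0,\BK}\neq\emptyset$ --- ``by Lemma~\ref{lem-tails} some positive genus point of $Y_\BK^{\an}$ has its image on $\Gamma_{0,\BK}$'' --- is not what Lemma~\ref{lem-tails} says; the positive genus points may all lie off $\Sigma_{0,\BK}$ (these are exactly the tails), or there may be none at all. The intended reason is simply that $\Gamma_\CX\supseteq\Sigma'$ contains the Type~II vertices of $\Gamma_{0,\BK}$ by hypothesis (with a separate word needed if $\Gamma_{0,\BK}$ happens to have no Type~II vertex). Second, the assertion that the Type~II vertex set of $\Gamma'_\CX$, and likewise of $\phi_\BK^{-1}(\Gamma'_\CX)$, is \emph{exactly} $\Sigma'$ (resp.\ $\Sigma$) needs more care: forming the union with $\Gamma_{0,\BK}$ and then the preimage under $\phi_\BK$ can create new degree $\ge 3$ junctions that are not in $\Sigma'$ or $\Sigma$. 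Since the implication you want is ``$\Sigma$ contains a semistable vertex set $\Rightarrow$ $\Sigma$ is a semistable vertex set'', you must ensure the relevant vertex set is \emph{contained in} $\Sigma$, and this needs an argument that such extra junctions either do not arise or already lie in $\Sigma$.
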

\begin{proof}
This follows from \mbox{Remark \ref{rem-skeletons-and-models}(a)} and \mbox{Lemma \ref{lem-tails}(c)}.
\end{proof}

\begin{Def}
The \emph{tame locus} associated to $Y$ and $\phi$ is the subdomain
\begin{equation*}
\rU^{\tame}=\ret_\Gamma^{-1}\big(\{\xi\in\Gamma\mid\delta_\phi(\xi)=0\}\big)\subseteq(\BP^1_K)^{\an}.
\end{equation*}
\end{Def}

Suppose that $\xi\in(\BP_K^1)^{\an}$ is a leaf of $\Gamma$ that is the image of a point of positive genus on $Y_\BK^{\an}$ (which by \cite[Lemma 1.8]{diss} is necessarily contained in $\rU^{\tame}$). Then the retraction
\begin{equation*}
\ret_\Gamma^{-1}(\xi)
\end{equation*}
is a discoid, called a \emph{tail discoid}. We note that such a point $\xi$ cannot lie on $\Gamma_0$, since $\Gamma_0$ has no leaves of Type II. The union of all tail discoids, denoted $\rU^{\tail}$, is called the \emph{tail locus} associated to $Y$ and $\phi$. The complement
\begin{equation*}
\rU^{\interior}\coloneqq\rU^{\tame}\setminus\rU^{\tail}
\end{equation*}
is called the \emph{interior locus} associated to $Y$ and $\phi$.

\begin{Prop}\label{prop-potentially-phi-semistable}
Let $\CX$ be a potentially semistable model of $\BP_K^1$ supporting all valuations corresponding to a Type II vertex of $\Gamma_0$ or to a boundary point of $\rU^{\tame}$. Then $\CX$ is potentially $\phi$-semistable.
\end{Prop}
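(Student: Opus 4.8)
The plan is to deduce the proposition from Lemma \ref{lem-potentially-phi-semistable}. Since $\CX$ already supports the Type II vertices of $\Gamma_0$ by hypothesis, that lemma reduces the statement to checking that $\CX$ also supports the valuation of every Type II point $\bar\xi\in(\BP^1_K)^{\an}$ that is the image of a positive-genus point $\eta\in Y^{\an}_\BK$; I will show each such $\bar\xi$ is either a Type II vertex of $\Gamma_0$ or a boundary point of $\rU^{\tame}$. Write $\xi=\phi(\eta)\in(\BP^1_\BK)^{\an}$, so that $\bar\xi$ is the image of $\xi$ under $(\BP^1_\BK)^{\an}\to(\BP^1_K)^{\an}$.

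The first step is that $\delta_\phi(\eta)=0$. Otherwise, by Remark \ref{rem-top-ram-closed}(c), the reduction curve at $\eta$ would be a purely inseparable cover of the reduction curve at $\xi$; but the latter has genus $0$, as does every Type II point of $(\BP^1_\BK)^{\an}$, and a purely inseparable cover of curves preserves the genus, so $g(\eta)=0$ — a contradiction. Hence $\delta_\phi(\bar\xi)=0$, and $\bar\xi\in\Gamma$ since $\xi$ is one of the points spanning $\Gamma_\BK$; thus $\bar\xi\in\rU^{\tame}$. As $\rU^{\tame}$ is closed, showing $\bar\xi\in\partial\rU^{\tame}$ amounts to showing that $\delta_\phi$ does not vanish identically on a neighbourhood of $\bar\xi$ in $\Gamma$.

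Now I distinguish cases. If $\xi$ is a Type II vertex of $\Gamma_{0,\BK}$, then $\bar\xi$ is a Type II vertex of $\Gamma_0$ and we are done. Otherwise $\xi$ is not a vertex of $\Gamma_{0,\BK}$ at all — it cannot be a Type I vertex, as its fibre contains the Type II point $\eta$ — and I claim $\delta_\phi$ has a strictly positive slope on some branch at $\xi$ lying in $\Gamma_\BK$, which gives $\bar\xi\in\partial\rU^{\tame}$. If $\xi\notin\Gamma_{0,\BK}$, then $\eta\notin\Sigma_{0,\BK}=\phi^{-1}(\Gamma_{0,\BK})$, and by Lemma \ref{lem-tails}(a),(b) there is a path from $\eta$ to $\Sigma_{0,\BK}$ along which $\delta_\phi>0$ away from $\eta$; its image under $\phi$ is a non-degenerate segment of $\Gamma_\BK$ issuing from $\xi$ with $\delta_\phi>0$. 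If instead $\xi$ lies in the interior of an edge of $\Gamma_{0,\BK}$, then $\eta$ lies over the residual annulus of that edge, which contains no branch points of $\phi$; the connected component through $\eta$ of the preimage of this annulus is then an \'etale cover of an annulus, and the genus formula \cite[Theorem 6.2.7]{ctt}, evaluated at $\eta$, shows that $g(\eta)>0$ forces a kink of $\delta_\phi$ at $\eta$. Since $\delta_\phi\ge 0$ and $\delta_\phi(\eta)=0$, such a kink can only arise from a strictly positive outgoing slope on one of the two branches at $\eta$ running along the edge, and that branch maps into $\Gamma_\BK$. Thus in every case $\CX$ supports $\bar\xi$, and Lemma \ref{lem-potentially-phi-semistable} yields the proposition.

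The step I expect to be the main obstacle is the edge-interior sub-case: one has to extract from the genus formula the strictly local assertion that a positive-genus point lying over the interior of an edge of $\Gamma_{0,\BK}$ cannot be a point of local constancy of $\delta_\phi$. Some care is also needed in passing between $\BK$ and $K$: one reads ``Type II vertex of $\Gamma_0$'' as the image of a Type II vertex of $\Gamma_{0,\BK}$ — equivalently, as a component of the $\CO_K$-model of $\BP^1_K$ whose normalized base change is the model attached to $\Gamma_{0,\BK}$ — and one checks that this notion, and membership in $\partial\rU^{\tame}$, descend well along $(\BP^1_\BK)^{\an}\to(\BP^1_K)^{\an}$.
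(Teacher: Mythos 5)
Your overall structure matches the paper's: reduce via Lemma \ref{lem-potentially-phi-semistable}, then show the image of every positive-genus point is a vertex of $\Gamma_0$ or a boundary point of $\rU^{\tame}$, with the same three-way case distinction. Your justification that $\delta_\phi(\eta)=0$ (via the genus-preserving property of purely inseparable covers, cf.\ Remark \ref{rem-top-ram-closed}(c)) is a correct expansion of something the paper leaves implicit, and your treatment of the case $\xi\notin\Gamma_{0,\BK}$ via Lemma \ref{lem-tails}(b) is exactly the paper's argument for the tail locus.

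The gap is precisely where you flagged it: the edge-interior case. Saying that ``the genus formula \cite[Theorem 6.2.7]{ctt}, evaluated at $\eta$, shows $g(\eta)>0$ forces a kink'' does not actually establish anything, and Theorem 6.2.7 is the wrong tool here — it is a global genus formula for wide open domains, and repackaging it as a local statement at $\eta$ would require controlling the topological ramification indices along all boundary branches of a shrinking wide open, which is exactly the work you are trying to avoid. The paper instead applies the \emph{pointwise} Riemann--Hurwitz formula \cite[Theorem 4.5.4]{ctt} at $\eta$, using that a non-vertex point of $\Sigma_{0,\BK}$ has exactly two branches $v_1,v_2$ in $\Sigma_{0,\BK}$, and then invokes semicontinuity of the topological ramification index (Remark \ref{rem-top-ram-closed}(a)) to bound $n_{v_1}+n_{v_2}\le 2n_\eta$. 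This gives the clean inequality $3g(\eta)\le\slope_{v_1}(\delta_\phi)+\slope_{v_2}(\delta_\phi)$, from which a positive slope — and hence $\bar\xi\in\partial\rU^{\tame}$ — is immediate. Your remark about the two ``edge branches'' being the only place a kink can come from is morally the right reading of this computation, but you need the semicontinuity input to dispose of the ramification-index terms, and you need the local Riemann--Hurwitz formula rather than the wide-open version to make the argument genuinely local. Also, calling the component over the residual annulus an ``\'etale cover'' is not quite accurate without further argument — $\delta_\phi(\eta)=0$ only controls the fibre over $\eta$ itself, not the whole annulus, and if $\delta_\phi$ were identically zero on a neighbourhood there would be nothing to prove anyway.
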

\begin{proof}
In the following proof, we again denote by $\Sigma_{0,\BK}$ the inverse image of $\Gamma_{0,\BK}$ in $Y_\BK^{\an}$. In light of \mbox{Lemma \ref{lem-potentially-phi-semistable}}, it suffices to show that every point $\xi\in(\BP_K^1)^{\an}$ that is the image of a point of positive genus on $Y_{\BK }^{\an}$ arises as a boundary point of $\rU^{\tame}$ or as a vertex of $\Sigma_{0,\BK}$.

Consider first a point $\eta$ of positive genus contained in $\Sigma_{0,\BK}$. Clearly $\eta$ is a topological ramification point of the map $Y_\BK^{\an}\to(\BP^1_\BK)^{\an}$ with $\delta_\phi(\eta)=0$. If $\eta$ is not a vertex of $\Sigma_{0,\BK}$, then there are only two branches at $\eta$ contained in $\Sigma_{0,\BK}$, which we denote $v_1,v_2$. The Riemann--Hurwitz Formula \cite[\mbox{Theorem 4.5.4}]{ctt} applied to $\eta$ reads
\begin{equation*}
2g(\eta)-2=-2n_\eta+n_{v_1}+\frac{2}{3}\slope_{v_1}(\delta_\phi)+n_{v_2}+\frac{2}{3}\slope_{v_2}(\delta_\phi)-2,
\end{equation*}
\sloppy where $n_\eta$ and $n_{v_i}$ denote the topological ramification indices at $\eta$ and $v_i$ (\mbox{Definition \ref{def-topological-ramification-index}}, \mbox{Remark \ref{rem-top-ram-closed}(b)}). Note that our slopes differ from the ones in \cite{ctt} by a factor $-2/3$, corresponding to the factor relating $\delta_\phi^{\ctt}$ and $\delta_\phi$ as explained in \mbox{Section \ref{sec-delta}}. Semicontinuity of the topological ramification index (Remark \ref{rem-top-ram-closed}(a)) shows $n_{v_1}+n_{v_2}\le 2n_\eta$, so the Riemann-Hurwitz Formula implies
\begin{equation*}
3g(\eta)\le\slope_{v_1}(\delta_\phi)+\slope_{v_2}(\delta_\phi).
\end{equation*}
Thus we must have $\slope_{v_1}(\delta_\phi)>0$ or $\slope_{v_2}(\delta_\phi)>0$, in which case the image of $\eta$ in $(\BP_K^1)^{\an}$ is a boundary point of the interior locus.

Finally consider a point $\eta$ of positive genus not contained in $\Sigma_{0,\BK}$. Again we have $\delta_\phi(\eta)=0$, while the path connecting $\eta$ and $\Sigma_{0,\BK}$ consists by \mbox{Lemma \ref{lem-tails}(b)} of wild topological ramification points. Thus the image of $\eta$ in $(\BP_K^1)^{\an}$ is a boundary point of the tail locus.
\end{proof}

To describe $\rU^{\tame}$, $\rU^{\tail}$, and $\rU^{\interior}$, we will work with $\BK $-valued points on $Y$ and $\BP_K^1$. We denote by $\rU^{\tame}_{\BK },\rU^{\tail}_{\BK },\rU^{\interior}_{\BK }$ the inverse images of $\rU^{\tame},\rU^{\tail},\rU^{\interior}$ in $(\BP^1_{\BK })^{\an}$. We have
\begin{equation*}
\rU_\BK^{\tame}=\ret^{-1}_{\Gamma_\BK}\big(\{\xi\in\Gamma_\BK\mid\delta_\phi(\xi)=0\}\big)\subseteq(\BP^1_\BK)^{\an}.
\end{equation*}
Indeed, the base change map $\pi\colon\BP^1_\BK\to\BP^1_K$ is compatible with retractions in the sense that the diagram
\begin{equation*}
\begin{tikzcd}
(\BP^1_\BK)^{\an} \arrow[r, "\pi"] \arrow[d, "\ret_{\Gamma_\BK}"'] & (\BP^1_K)^{\an} \arrow[d, "\ret_\Gamma"] \\
\Gamma_\BK \arrow[r, "\rest{\pi}{\Gamma_\BK}"']                     & \Gamma                               
\end{tikzcd}
\end{equation*}
commutes, because given a point $\xi\in(\BP^1_\BK)^{\an}$ with $\xi'\coloneqq\ret_{\Gamma_\BK}(\xi)$, the map $\pi$ sends the unique path connecting $\xi$ and $\xi'$ to the unique path connecting $\pi(\xi)$ and $\pi(\xi')$. Similarly, $\rU^{\tail}_\BK$ is the union of disks of the form $\ret_{\Gamma_\BK}^{-1}(\xi)$, where $\xi$ is a leaf of $\Gamma_\BK$ that is the image of a point of positive genus on $Y_\BK^{\an}$.

\begin{Lem}\label{lem-souped-up-lambda-distinction}
Let $x_0\colon\Spec\BK\to\BP^1_K$ be a $\BK$-valued point which is not among the vertices of $\Gamma_{0,\BK}$.
\begin{enumerate}[(a)]
\item We have $x_0\in\rU^{\interior}_{\BK }$ if and only if $\lambda(x_0)=\mu(x_0)$.
\item We have $x_0\in\rU^{\tail}_{\BK }$ if and only if $\lambda(x_0)>\mu(x_0)$ and the unique point $\eta\in Y^{\an}_{\BK }$ above $\xi_{\lambda(x_0)}$ has positive genus.
\end{enumerate}
\end{Lem}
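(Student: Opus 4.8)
The plan is to prove (a) and (b) at once by distinguishing the three mutually exclusive cases: (i) $\lambda(x_0)=\mu(x_0)$; (ii) $\lambda(x_0)>\mu(x_0)$ and the unique point $\eta\in Y^{\an}_\BK$ above $\xi_{\lambda(x_0)}$ has positive genus; (iii) $\lambda(x_0)>\mu(x_0)$ and $g(\eta)=0$. In each case I would locate the retraction $\ret_{\Gamma_\BK}(x_0)$ precisely, check whether $\delta_\phi$ vanishes there, and check whether that point is a leaf of $\Gamma_\BK$ which is the image of a positive-genus point. Since $\rU^{\interior}_\BK=\rU^{\tame}_\BK\setminus\rU^{\tail}_\BK$, this decides membership of $x_0$ in $\rU^{\interior}_\BK$ and in $\rU^{\tail}_\BK$ in every case, and comparing the outcomes with the case hypotheses yields both equivalences.

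First I would fix the geometry. Let $D$ be the connected component of $(\BP^1_\BK)^{\an}\setminus\Gamma_{0,\BK}$ containing $x_0$; it is an open disk with unique boundary point $\ret_{\Gamma_{0,\BK}}(x_0)=\xi_{\mu(x_0)}$, so $[x_0,\xi_{\mu(x_0)}]=\{\xi_r:r\ge\mu(x_0)\}$, and since $\Gamma_{0,\BK}\subseteq\Gamma_\BK$ the point $\ret_{\Gamma_\BK}(x_0)$ lies on this path, say $\ret_{\Gamma_\BK}(x_0)=\xi_{r_0}$ with $r_0\ge\mu(x_0)$. Put $D_\lambda:=\{\xi\in(\BP^1_\BK)^{\an}:v_\xi(x-x_0)>\lambda(x_0)\}\subseteq D$, the open disk of radius $\lambda(x_0)$ about $x_0$ (so $x_0\in D_\lambda$); by the ``largest disk'' characterization of $\lambda(x_0)$ it is disjoint from $\Gamma_{0,\BK}$ and from the wild topological branch locus, so by Remark \ref{rem-top-ram-closed}(c) (a Type II point where $\delta_\phi>0$ is a wild topological branch point) $\delta_\phi$ vanishes at every Type II point of $D_\lambda$. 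Moreover, straight from the definition, $\delta_\phi(\xi_r)>0$ for $\mu(x_0)\le r<\lambda(x_0)$ and $\delta_\phi(\xi_{\lambda(x_0)})=0$. (Both $\mu(x_0)$ and $\lambda(x_0)$ are rational, by the explicit formula for $\mu$ and by Lemma \ref{lem-black-box}(a) in cases (ii)/(iii), so $\xi_{\mu(x_0)}$ and $\xi_{\lambda(x_0)}$ are Type II points.)

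The crux -- and the step I expect to require the most care -- is the claim that \emph{no positive-genus point of $Y^{\an}_\BK$ maps into $D_\lambda$}, and that, when $\lambda(x_0)>\mu(x_0)$, \emph{no positive-genus point maps into a connected component of $(\BP^1_\BK)^{\an}\setminus\{\xi_{\lambda(x_0)}\}$ other than $D_\lambda$ or the component $U_0$ containing $\xi_{\mu(x_0)}$}. To see this, suppose $\eta'$ is a positive-genus point with $\phi(\eta')$ in such a disk $D'$. Then $D'\subseteq D$ misses $\Gamma_{0,\BK}$, so $\eta'\notin\Sigma_{0,\BK}:=\phi^{-1}(\Gamma_{0,\BK})$, and Lemma \ref{lem-tails}(a)--(b) gives a path $[\eta',\eta'_1]$ meeting $\Sigma_{0,\BK}$ only in $\eta'_1$ with $\delta_\phi>0$ on $(\eta',\eta'_1]$; its image is a path from $\phi(\eta')\in D'$ to $\phi(\eta'_1)\in\Gamma_{0,\BK}$. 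If $D'=D_\lambda$, a Type II point $\zeta\in(\eta',\eta'_1]$ close to $\eta'$ has $\delta_\phi(\zeta)>0$, hence $\phi^{-1}(\phi(\zeta))=\{\zeta\}$ and $\delta_\phi(\phi(\zeta))=\delta_\phi(\zeta)>0$, while $\phi(\zeta)$ is a Type II point of $D_\lambda$ -- contradiction. For the other $D'$ (which has boundary point $\xi_{\lambda(x_0)}$), $\phi(\eta')$ and $\phi(\eta'_1)\in\Gamma_{0,\BK}$ lie in distinct components of $(\BP^1_\BK)^{\an}\setminus\{\xi_{\lambda(x_0)}\}$, so the image path passes through $\xi_{\lambda(x_0)}$; its only preimage being $\eta$, we get $\eta\in(\eta',\eta'_1)$ and hence $\delta_\phi(\eta)>0$, contradicting $\delta_\phi(\xi_{\lambda(x_0)})=0$. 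Since branch points and $\infty$ lie on $\Gamma_{0,\BK}\subseteq U_0$, the claim shows every point spanning $\Gamma_\BK$ lies in $\overline{U_0}=U_0\cup\{\xi_{\lambda(x_0)}\}$; as subtrees of the uniquely path-connected space $(\BP^1_\BK)^{\an}$ are geodesically convex, $\Gamma_\BK\subseteq\overline{U_0}$, and so $\xi_{\lambda(x_0)}$, if it lies in $\Gamma_\BK$ at all, is a leaf of $\Gamma_\BK$.

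The three cases are now short. In case (i), $D_\lambda=D$ contains no point of $\Gamma_\BK$ (by the claim, branch points and $\infty$ also avoiding $D$), so $\ret_{\Gamma_\BK}(x_0)=\xi_{\mu(x_0)}=\xi_{\lambda(x_0)}$, at which $\delta_\phi=0$; and this Type II point is not a leaf of $\Gamma_{0,\BK}$ -- hence not of $\Gamma_\BK\supseteq\Gamma_{0,\BK}$ -- since $\Gamma_{0,\BK}$ is spanned by Type I points. Thus $x_0\in\rU^{\tame}_\BK\setminus\rU^{\tail}_\BK=\rU^{\interior}_\BK$. In case (ii), $\xi_{\lambda(x_0)}=\phi(\eta)$ is a Type II point of $\Gamma_\BK$; since $\Gamma_\BK\cap D_\lambda=\emptyset$ and $x_0\in D_\lambda$ has boundary point $\xi_{\lambda(x_0)}$, $\ret_{\Gamma_\BK}(x_0)=\xi_{\lambda(x_0)}$, a leaf of $\Gamma_\BK$ that is the image of the positive-genus point $\eta$, so $x_0\in\rU^{\tail}_\BK$ and hence $x_0\notin\rU^{\interior}_\BK$. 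In case (iii), $\xi_{\lambda(x_0)}$ is not a branch point, not $\infty$, not the image of a positive-genus point (as $g(\eta)=0$), and not interior to any geodesic between spanning points of $\Gamma_\BK$ (previous paragraph), so $\xi_{\lambda(x_0)}\notin\Gamma_\BK$; with $\Gamma_\BK\cap D_\lambda=\emptyset$ this forces $\ret_{\Gamma_\BK}(x_0)=\xi_{r_0}$ for some $r_0$ with $\mu(x_0)\le r_0<\lambda(x_0)$, whence $\delta_\phi(\ret_{\Gamma_\BK}(x_0))>0$ and $x_0\notin\rU^{\tame}_\BK\supseteq\rU^{\interior}_\BK\cup\rU^{\tail}_\BK$. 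Reading off the three cases: $x_0\in\rU^{\interior}_\BK$ iff case (i) holds, i.e.\ iff $\lambda(x_0)=\mu(x_0)$, and $x_0\in\rU^{\tail}_\BK$ iff case (ii) holds, i.e.\ iff $\lambda(x_0)>\mu(x_0)$ and $g(\eta)>0$, which are exactly (a) and (b).
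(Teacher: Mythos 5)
Your proof is correct, but it is organized in the opposite direction from the paper's and contains one extra ingredient worth flagging.

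The paper's proof cases directly on the location of $\ret_{\Gamma_\BK}(x_0)$, exploiting the structure $\Gamma_\BK=\Gamma_{0,\BK}\cup(\text{tail paths})$ given by Lemma \ref{lem-tails}: either the retraction lands on $\Gamma_{0,\BK}$ (and then necessarily at $\xi_{\mu(x_0)}$), or it lands on a tail path $\phi([\eta,\eta_1))$. In each case the paper reads off $\delta_\phi$ at the retraction point, and whether it is a leaf of $\Gamma_\BK$; the equivalence with the $\lambda$/genus conditions then drops out. You instead case on the right-hand side conditions and must \emph{locate} the retraction in each case, which you accomplish via the ``key claim'' that positive-genus points of $Y^{\an}_\BK$ can map only into $\overline{U_0}$. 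That claim is a real piece of extra work, though it is again extracted from Lemma \ref{lem-tails} together with unique path-connectedness and uniqueness of the preimage of $\xi_{\lambda(x_0)}$. Both routes are valid; yours makes the backward implications of the biconditionals more explicit (the paper is rather terse about why, say, $\lambda>\mu$ with positive-genus preimage forces $\ret_{\Gamma_\BK}(x_0)=\phi(\eta)$), at the cost of the extra convexity argument. One small economy you could gain: in the $D'=D_\lambda$ subcase of your key claim you invoke the ``largest open disk'' characterization of $\lambda(x_0)$ (asserted in the paper with a citation to the thesis) to get $\delta_\phi=0$ on Type II points of $D_\lambda$. This dependency is avoidable: the image of the tail path from $\phi(\eta')\in D_\lambda$ to $\phi(\eta'_1)\in\Gamma_{0,\BK}$ must cross the boundary point $\xi_{\lambda(x_0)}$, where $\delta_\phi=0$ by definition; a preimage of that crossing point lies in $(\eta',\eta'_1]$, on which $\delta_\phi>0$ by Lemma \ref{lem-tails}(b), giving the contradiction directly (when $\lambda(x_0)=\mu(x_0)$ one uses that $\delta_\phi(\xi_{\mu(x_0)})=0$ and that any preimage of $\xi_{\mu(x_0)}$ with positive different would force $\phi^{-1}(\xi_{\mu(x_0)})$ to be a singleton with $\delta_\phi>0$). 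This mirrors the argument you already give for the other components $D'$.
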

\begin{proof}
As usual, we denote by $\xi_r$ the boundary point of the disk $\rD[x_0,r]$. Then the path connecting $x_0$ and $\Gamma_{0,\BK}$ is the set
\begin{equation*}
\{\xi_r\mid \mu(x_0)\le r\le \infty\}.
\end{equation*}
We do a case distinction based on the retraction $\ret_{\Gamma_\BK}(x_0)$.

\emph{Case 1}: $\ret_{\Gamma_\BK}(x_0)\in\Gamma_{0,\BK}$. In this case, we have $\ret_{\Gamma_\BK}(x_0)=\xi_{\mu(x_0)}$. By the definition of $\lambda(x_0)$, we have $\delta_\phi(\xi_{\mu(x_0)})=0$ if and only if $\lambda(x_0)=\mu(x_0)$, and we have $\delta_\phi(\xi_{\mu(x_0)})>0$ if and only if $\lambda(x_0)>\mu(x_0)$. In the first case, $x_0\in\rU^{\tame}_\BK$, and indeed $x_0\in\rU^{\interior}_\BK$, since $\ret_{\Gamma_\BK}(x_0)\in\Gamma_{0,\BK}$ cannot be a leaf of $\Gamma_\BK$. In the second case, $x_0\not\in\rU^{\tame}_\BK$.

\emph{Case 2}: $\ret_{\Gamma_\BK}(x_0)$ lies on the image of one of the paths $[\eta,\eta_1)$ described in \mbox{Lemma \ref{lem-tails}}, where $\eta\in Y^{\an}_\BK$ is a point of positive genus and $\eta_1$ lies above $\Gamma_0$. In this case, we have $\ret_{\Gamma_\BK}(x_0)=\xi_r$ for some $r>\mu(x_0)$. By \mbox{Lemma \ref{lem-tails}(b)}, it follows that $\lambda(x_0)>\mu(x_0)$. We have $\delta_\phi(\xi_r)=0$ if and only if $\xi_r$ is the image of the point $\eta\in Y_\BK^{\an}$ of positive genus. Thus $x_0\in\rU_\BK^{\tail}$ if $\xi_r=\phi(\eta)$ and $x_0\not\in\rU^{\tame}_\BK$ otherwise.

This concludes the proof of the lemma in every case.
\end{proof}

To compute the tame locus, we define the subdomain
\begin{equation*}
\rU=\rU_2\cup\rU_4\subseteq(\BP_K^1)^{\an},\qquad\textrm{where}
\end{equation*}
\begin{equation*}
\rU_2=\{\xi\in (\BP_K^1)^{\an}\mid 3\reallywidehat{\Nm b_0}(\xi)+4\reallywidehat{\Nm c_3}(\xi)\ge6\reallywidehat{\Nm c_2}(\xi)\} \qquad\textrm{and}
\end{equation*}
\begin{equation*}
\rU_4=\{\xi\in (\BP_K^1)^{\an}\mid 8\reallywidehat{\Nm c_3}(\xi)\ge3\reallywidehat{\Nm b_0}(\xi)+6\reallywidehat{\Nm c_4}(\xi)\},
\end{equation*}
and denote by $\rU_\BK$ the inverse image of $\rU$ in $(\BP^1_\BK)^{\an}$. (Here $\Nm$ denotes the norm map of the extension of function fields $K(Y)/K(x)$, and $b_0,c_2,c_3,c_4$ are the functions introduced in \eqref{equ-need-for-valuative-functions}, so that $\Nm b_0$ and the $\Nm c_k$ are rational functions on $\BP_K^1$.)

\begin{Satz}\label{thm-main}
We have the following inclusions:
\begin{equation*}
\rU^{\tail}\subseteq\rU\subseteq\rU^{\tame}
\end{equation*}
\end{Satz}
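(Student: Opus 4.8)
The plan is to prove the inclusions after base change to $\BK$: since $\rU_\BK$, $\rU^{\tame}_\BK$, $\rU^{\tail}_\BK$ are by definition the preimages of $\rU$, $\rU^{\tame}$, $\rU^{\tail}$ under the (surjective) base change map, it suffices to show $\rU^{\tail}_\BK\subseteq\rU_\BK\subseteq\rU^{\tame}_\BK$. Next, I would reduce to $\BK$-valued points. The three sets are closed in $(\BP^1_\BK)^{\an}$: $\rU_\BK$ is cut out by inequalities between the continuous valuative functions $\reallywidehat{\Nm b_0}$, $\reallywidehat{\Nm c_k}$; $\rU^{\tame}_\BK=\ret_{\Gamma_\BK}^{-1}\big(\{\xi\in\Gamma_\BK\mid\delta_\phi(\xi)=0\}\big)$ is the preimage of a closed subset of the finite tree $\Gamma_\BK$ under a continuous retraction; and $\rU^{\tail}_\BK$ is a finite union of closed discoids. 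Moreover $\rU^{\tail}_\BK$ (a union of discoids) and $\rU_\BK$ (a finite union of rational domains) each equal the closure of their respective sets of $\BK$-valued points. As the $\BK$-valued points are dense in $(\BP^1_\BK)^{\an}$, it is thus enough to prove $x_0\in\rU^{\tail}_\BK\Rightarrow x_0\in\rU_\BK$ and $x_0\in\rU_\BK\Rightarrow x_0\in\rU^{\tame}_\BK$ for $\BK$-valued points $x_0$. Discarding the finitely many such points lying over the branch locus of $\phi$ or mapping to a vertex of $\Gamma_{0,\BK}$ changes none of the closures, so I may assume $x_0\colon\Spec\BK\to U$ is not a vertex of $\Gamma_{0,\BK}$.

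I would then split into cases by Lemma~\ref{lem-souped-up-lambda-distinction}. If $\lambda(x_0)=\mu(x_0)$ then $x_0\in\rU^{\interior}_\BK\subseteq\rU^{\tame}_\BK$ and $x_0\notin\rU^{\tail}_\BK$, so both implications hold trivially. So suppose $\lambda(x_0)>\mu(x_0)$. Since $x_0\in U$, the fibre $\phi^{-1}(x_0)$ consists of three distinct $\BK$-valued points $p_1,p_2,p_3\in\Spec S$, and Lemma~\ref{lem-black-box} applies with $p_0=p_i$ for each $i$; write $\eta\in Y^{\an}_\BK$ for the common point above $\xi_{\lambda(x_0)}$. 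By Remark~\ref{rem-read-off-genus}, $g(\eta)$ is $3$, $1$ or $0$ according to whether the degree-$4$ coefficient of the Artin--Schreier polynomial \eqref{equ-artin-schreier-polynomial} built from $p_i$ is non-zero, vanishes while the degree-$2$ coefficient does not, or both coefficients vanish; and since $g(\eta)$ does not depend on $i$, this vanishing pattern is the same at all three preimages. Together with Lemma~\ref{lem-black-box}(d), the position of the maximum in Lemma~\ref{lem-black-box}(a) is then uniform in $i$: it is attained at $k=4$ iff $g(\eta)=3$; it is attained at $k=2$ but not at $k=4$ iff $g(\eta)=1$; and it is attained at $k=3$ and at neither $k=2$ nor $k=4$ iff $g(\eta)=0$. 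Finally, Lemma~\ref{lem-souped-up-lambda-distinction}(b) gives $x_0\in\rU^{\tail}_\BK\iff g(\eta)>0$, and since $\lambda(x_0)>\mu(x_0)$ also $x_0\in\rU^{\tame}_\BK\iff g(\eta)>0$.

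The remaining step is to convert the formula of Lemma~\ref{lem-black-box}(a) into the inequalities defining $\rU_2$ and $\rU_4$. The bridge is the identity $\reallywidehat{\Nm f}(x_0)=\sum_{i=1}^{3}\hat f(p_i)$ for $f\in S$, valid because $\phi$ is \'etale over $x_0$, so that $(\Nm f)(x_0)=\prod_i f(p_i)$. Clearing denominators in the comparison between the $k$-th term and the $k=3$ term of Lemma~\ref{lem-black-box}(a), one finds for each $i$ that the $k=4$ term is $\ge$ the $k=3$ term iff $8\hat c_3(p_i)\ge 3\hat b_0(p_i)+6\hat c_4(p_i)$, and the $k=2$ term is $\ge$ the $k=3$ term iff $3\hat b_0(p_i)+4\hat c_3(p_i)\ge 6\hat c_2(p_i)$, strict inequalities corresponding to strict ones. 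If $x_0\in\rU^{\tail}_\BK$, then $g(\eta)\in\{1,3\}$: when $g(\eta)=3$ the maximum is attained at $k=4$ for every $p_i$, hence $8\hat c_3(p_i)\ge 3\hat b_0(p_i)+6\hat c_4(p_i)$ for all $i$, and summing over the fibre gives $8\reallywidehat{\Nm c_3}(x_0)\ge 3\reallywidehat{\Nm b_0}(x_0)+6\reallywidehat{\Nm c_4}(x_0)$, i.e.\ $x_0\in\rU_4$; when $g(\eta)=1$ the maximum is attained at $k=2$ for every $p_i$, and summing $3\hat b_0(p_i)+4\hat c_3(p_i)\ge 6\hat c_2(p_i)$ gives $x_0\in\rU_2$. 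This proves $\rU^{\tail}\subseteq\rU$. For $\rU\subseteq\rU^{\tame}$ I would use the contrapositive: if $x_0\notin\rU^{\tame}_\BK$ then (being in the case $\lambda(x_0)>\mu(x_0)$) $g(\eta)=0$, so for every $p_i$ the maximum is attained at $k=3$ and at neither $k=2$ nor $k=4$, which yields the strict inequalities $3\hat b_0(p_i)+4\hat c_3(p_i)<6\hat c_2(p_i)$ and $8\hat c_3(p_i)<3\hat b_0(p_i)+6\hat c_4(p_i)$; summing over $i=1,2,3$ shows $x_0\notin\rU_2$ and $x_0\notin\rU_4$, hence $x_0\notin\rU$.

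The main obstacle is, I expect, the reduction to $\BK$-valued points rather than the arithmetic: one must carefully justify that $\rU_\BK$, $\rU^{\tame}_\BK$, $\rU^{\tail}_\BK$ are closed and that $\rU_\BK$ and $\rU^{\tail}_\BK$ are recovered as the closures of their $\BK$-valued points, which relies on the description of $\rU^{\tame}$ and $\rU^{\tail}$ via the retraction onto the finite tree $\Gamma_\BK$ and on $\rU$ being a finite union of rational domains. A subtler point --- the one that legitimises summing the three fibrewise inequalities --- is that the single invariant $g(\eta)$ forces the vanishing pattern of the degree-$2,3,4$ coefficients of \eqref{equ-artin-schreier-polynomial} to be the same at all three preimages of $x_0$; without it, Lemma~\ref{lem-black-box} would only control the maximum at one preimage, and the inequalities could not be added over the fibre.
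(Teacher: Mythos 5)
Your proof is correct and follows essentially the same route as the paper's: reduce to $\BK$-valued points off $\Gamma_{0,\BK}$, split into cases via Lemma~\ref{lem-souped-up-lambda-distinction}, apply Lemma~\ref{lem-black-box}(a),(d) and Remark~\ref{rem-read-off-genus} to translate the genus of $\eta$ into inequalities on $\hat b_0,\hat c_2,\hat c_3,\hat c_4$ at each preimage, and sum over the fiber to obtain the norm inequalities defining $\rU_2$ and $\rU_4$. You are somewhat more explicit than the paper about why the density reduction is legitimate and why the vanishing pattern of the Artin--Schreier coefficients is uniform across the three preimages of $x_0$, but these are exactly the points the paper's ``Note that \eqref{equ-c3-dominates} also holds if we replace $p_0$\dots'' quietly relies on.
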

\begin{proof}
Since the Type I points in $(\BP_K^1)^{\an}\setminus\Gamma_0$ are a dense subset of $(\BP^1_K)^{\an}$, it suffices to show the inclusions $\rU_{\BK }^{\tail}\subseteq\rU_{\BK }\subseteq\rU^{\tame}_{\BK }$ for $\BK $-valued points $x_0$ not contained in $\Gamma_{0,\BK}$. If $x_0\in\rU^{\interior}_{\BK }$, there is nothing to show. Thus we may assume $x_0\not\in\rU^{\interior}_{\BK }$, in which case we need to show that $x_0\in\rU^{\tail}_{\BK }$ if and only if $x_0\in\rU_{\BK }$.

Choose a $\BK $-valued point $p_0$ on $Y$ with $\phi(p_0)=x_0$. As before, we denote by $\eta\in Y^{\an}_{\BK }$ the unique point above $\xi_{\lambda(x_0)}$, the boundary point of the disk of radius $\lambda(x_0)$ centered at $x_0$. By \mbox{Lemma \ref{lem-souped-up-lambda-distinction}(b)}, we have $\lambda(x_0)>\mu(x_0)$, and $x_0\in\rU_{\BK }^{\tail}$ if and only if $\eta$ has positive genus.

We may apply \mbox{Lemma \ref{lem-black-box}}, and do a case distinction based on the shape of the polynomial 
\begin{equation*}
\overline{c}_4\overline{t}^4+\overline{c}_3\overline{t}^3+\overline{c}_2\overline{t}^2=\overline{C\pi^{-v_{\lambda(x_0)}(C)}}
\end{equation*}
appearing in \mbox{Lemma \ref{lem-black-box}}. If $\overline{c}_4\ne0$, then as mentioned in \mbox{Remark \ref{rem-read-off-genus}}, the point $\eta$ is of genus $3$. Moreover, \mbox{Lemma \ref{lem-black-box}(d)} implies that
\begin{equation}\label{equ-c3-dominates}
\lambda(x_0)=\frac{3\hat{b}_0(p_0)-2\hat{c}_4(p_0)}{8}\ge\frac{3\hat{b}_0(p_0)-2\hat{c}_3(p_0)}{6}.
\end{equation}
Note that \eqref{equ-c3-dominates} also holds if we replace $p_0$ with a different point in the fiber $\phi^{-1}(x_0)$. Thus writing $\phi^{-1}(x_0)=\{p_0,p_1,p_2\}$, we have
\begin{IEEEeqnarray*}{rCcCl}
\frac{3\reallywidehat{\Nm b_0}(x_0)-2\reallywidehat{\Nm c_4}(x_0)}{8}&=&\sum_{j=1}^3\frac{3\hat{b}_0(p_j)-2\hat{c}_4(p_j)}{8}&&\\
&\ge&\sum_{j=1}^3\frac{3\hat{b}_0(p_j)-2\hat{c}_3(p_j)}{6}
&=&\frac{3\reallywidehat{\Nm b_0}(x_0)-2\reallywidehat{\Nm c_3}(x_0)}{6}.
\end{IEEEeqnarray*}
Rearranging this gives
\begin{equation}\label{equ-gen4-case}
8\reallywidehat{\Nm c_3}(x_0)\ge3\reallywidehat{\Nm b_0}(x_0)+6\reallywidehat{\Nm c_4}(x_0),
\end{equation}
so $x_0\in\rU_{\BK }$.

If $\overline{c}_4=0$, but $\overline{c}_2\ne0$, one proceeds analogously to the above to derive
\begin{equation}\label{equ-gen1-case}
3\reallywidehat{\Nm b_0}(x_0)+4\reallywidehat{\Nm c_3}(x_0)\ge6\reallywidehat{\Nm c_2}(x_0).
\end{equation}
Thus whenever $\eta$ has positive genus, we have $x_0\in\rU_{\BK }$. If on the other hand $\eta$ has genus $0$, then one finds that instead of \eqref{equ-gen4-case} and \eqref{equ-gen1-case} the opposite and strict inequalities hold. Thus $x_0\not\in\rU_{\BK }$.
\end{proof}

\section{Interior locus}\label{sec-interior-locus}

\noindent Let again $Y$ be a plane quartic in the normal form of \mbox{Section \ref{sec-quartics}}, say
\begin{equation*}
Y\colon\quad y^3+A_2y^2+A_1y+A_0=0,\qquad A_2,A_1,A_0\in K[x].
\end{equation*}
In the previous section, we have introduced the subdomain $\rU\subseteq(\BP_K^1)^{\an}$ associated to $Y$. It contains the tail locus $\rU^{\tail}$, and is contained in the tame locus $\rU^{\tame}$. Thus to determine all of $\rU^{\tame}=\rU^{\tail}\cup\rU^{\interior}$, it suffices to be able to compute $\rU^{\interior}$. We explain how to do this in this section.

Let $x_0\colon\Spec\BK \to\BP_K^1$ be a $\BK $-valued point different from $\infty$ which is fixed for the rest of this section. We will explain a method to directly compute the function $\delta_\phi$ on the path $[x_0,\infty]\subset(\BP^1_{\BK })^{\an}$.

Choose a point $p_0\in\phi^{-1}(x_0)$ and write $y_0\coloneqq y(p_0)$. As before writing $t\coloneqq x-x_0$, the generator $y-y_0$ of $\BK(Y)$ has minimal polynomial over $\BK(t)$ of the form
\begin{equation}\label{equ-unfinished-minimal-polynomial}
T^3+(a_2t^2+a_1t+a_0)T^2+(b_3t^3+b_2t^2+b_1t+b_0)T+c_4t^4+c_3t^3+c_2t^2+c_1t,
\end{equation}
where the coefficients $a_0,a_1,a_2$, $b_0,\ldots,b_3$, and $c_0,\ldots,c_4$ are certain elements of $\BK$, depending on $x_0$ and $p_0$. Finally, we consider the generator $z\coloneqq y+vt$, where $v$ is a solution to the equation
\begin{equation}\label{equ-another-cubic-equation}
v^3+a_1v^2+b_2v+c_3=0.
\end{equation}
The minimal polynomial of $z$ is then by construction of the form
\begin{equation}\label{equ-doctored-minimal-polynomial}
T^3+(a_2t^2+a_1t+a_0)T^2+(b_3t^3+b_2t^2+b_1t+b_0)T+c_4t^4+c_2t^2+c_1t.
\end{equation}
(The coefficients here are of course different from the ones in \eqref{equ-unfinished-minimal-polynomial}, we just reuse the notation for convenience. They are also different from the coefficients of \eqref{equ-desired-form}, since there we eliminated the degree-$1$ term, not the degree-$3$-term.) We write $A= a_2t^2+a_1t+a_0$, $B=b_3t^3+b_2t^2+b_1t+b_0$, $C=c_4t^4+c_2t^2+c_1t$. 

\begin{Rem}\label{rem-everything-rational}
In case $x_0$ is a closed point of $\BP^1_K$, the coefficients of $A,B,C$ lie in a finite extension of $K$, which we can explicitly describe. Indeed, a closed point $x_0$ becomes rational over a suitable finite extension of $K$; the coordinate $y_0$ lives in a further finite extension; finally we need to adjoin a root $v$ of \eqref{equ-another-cubic-equation}.
\end{Rem}

Let $\xi\in(\BP^1_{\BK })^{\an}$ be a point of Type II on the path $[x_0,\infty]$. We want to compute $\delta_\phi(\xi)$. For the moment we assume that $\delta_\phi(\xi)>0$, so that $\phi^{-1}(\xi)=\{\eta\}$ for a Type II point $\eta\in Y^{\an}_{\BK }$. The completed residue fields $\CH(\xi)$ and $\CH(\eta)$ are \emph{one-dimensional analytic fields} in the sense of \cite[\mbox{Section 2.1.1}]{ctt} over $\BK $, meaning that they are complete valued fields over $\BK $ which are finite over the completion of a subfield $\BK (x)$, $x\not\in \BK $. In \cite[\mbox{Section 2.4}]{ctt}, a \emph{different}
\begin{equation*}
\delta^{\ctt}(\CH_2/\CH_1)
\end{equation*}
is attached to every separable extension $\CH_2/\CH_1$ of one-dimensional analytic fields over $K$. In parallel with our normalization of $\delta_\phi$ in \mbox{Section \ref{sec-delta}}, we define
\begin{equation*}
\delta(\CH_2/\CH_1)\coloneqq-\frac{3}{2}\log\big(\delta^{\ctt}(\CH_2/\CH_1)\big).
\end{equation*}
By the definition of $\delta_\phi$ in \cite[\mbox{Section 4.1}]{ctt} we then have
\begin{equation*}
\delta_\phi(\xi)=\delta_\phi(\eta)=\delta\big(\CH(\eta)/\CH(\xi)\big).
\end{equation*}
We will need the following definitions (cf.\ \cite[\mbox{Section 2.1}]{ctt}).

\begin{Def}
Let $\CH$ be a one-dimensional analytic field over $K$ with valuation $v_\CH$. An element $x\in\CH\setminus \BK $ such that $\CH/\reallywidehat{\BK (x)}$ is a finite separable extension is called a \emph{parameter} for $\CH$. (Here $\reallywidehat{\BK (x)}$ denotes the completion of $\BK(x)$.)
\begin{enumerate}[(a)]
\item If $x\in\CH\setminus \BK $ is a parameter such that $\CH/\reallywidehat{\BK (x)}$ is a tame extension of valued fields, then $x$ is called a \emph{tame parameter} for $\CH$. 
\item If $x\in\CH\setminus \BK $ is a parameter such that we have
\begin{equation*}
v_\CH\big(\sum_ia_ix^i\big)=\min_i\big\{v_K(a_i)+iv_\CH(x)\big\}
\end{equation*}
for all polynomials $\sum_ia_ix^i\in\BK[x]$, then $x$ is called a \emph{monomial parameter} for $\CH$.
\end{enumerate}
\end{Def}

Our key tool for computing $\delta_\phi$ is the following:

\begin{Prop}\label{prop-formula-for-delta}
Let $\CH_2/\CH_1$ be a finite separable extension of one-dimensional analytic fields over $\BK $. Suppose that $x_1$ and $x_2$ are tame monomial parameters of $\CH_1$ and $\CH_2$ respectively. Then we have
\begin{equation*}
\frac{2}{3}\delta(\CH_2/\CH_1)=v_{\CH_2}(\tfrac{dx_1}{dx_2})+v_{\CH_2}(x_2)-v_{\CH_1}(x_1),
\end{equation*}
where $\frac{dx_1}{dx_2}$ is the unique element of $\CH_2$ for which $\frac{dx_1}{dx_2}dx_2=dx_1$.
\end{Prop}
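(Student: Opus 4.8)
The plan is to obtain the formula from the definition of the Cohen--Temkin--Trushin different in \cite[Section~2.4]{ctt} together with a local computation of the Kähler norm at a monomial parameter.

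First I would recall the setup from \cite{ctt}: for a one-dimensional analytic field $\CH$ over $\BK$ the module $\widehat\Omega_{\CH/\BK}$ of continuous differentials is a one-dimensional $\CH$-vector space equipped with a canonical norm $\|\cdot\|_\CH$, and for \emph{any} parameter $x$ of $\CH$ the differential $dx$ is an $\CH$-basis of it (the only point being $dx\neq 0$, which follows from separability of $\CH/\widehat{\BK(x)}$). For a finite separable extension $\CH_2/\CH_1$ the canonical comparison map $\widehat\Omega_{\CH_1/\BK}\otimes_{\CH_1}\CH_2\to\widehat\Omega_{\CH_2/\BK}$ is then an isomorphism of one-dimensional $\CH_2$-vector spaces, and, unwinding the definition of the different in \cite[Section~2.4]{ctt}, $\delta^{\ctt}(\CH_2/\CH_1)$ is its operator norm; equivalently, for any nonzero $\omega\in\widehat\Omega_{\CH_1/\BK}$, regarded inside $\widehat\Omega_{\CH_2/\BK}$ via this map, one has $\delta^{\ctt}(\CH_2/\CH_1)=\|\omega\|_{\CH_2}\big/\|\omega\|_{\CH_1}$.

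Next I would establish the auxiliary fact that $\|dx\|_\CH=|x|$, i.e.\ $\|dx/x\|_\CH=1$, whenever $x$ is a tame monomial parameter of $\CH$. Monomiality of $x$ says that $v_\CH$ restricted to $\widehat{\BK(x)}$ is the monomial valuation with prescribed value $v_\CH(x)$, and for this field one computes $\|dx\|_{\widehat{\BK(x)}}=|x|$ directly from the definition of the Gauss-type norm on $\widehat\Omega_{\widehat{\BK(x)}/\BK}=\widehat{\BK(x)}\,dx$. Tameness of the parameter means $\CH/\widehat{\BK(x)}$ is tamely ramified, hence $\delta^{\ctt}(\CH/\widehat{\BK(x)})=1$ by \cite{ctt}; so by the norm-ratio formula above the norm of $dx$ is unchanged on passing from $\widehat{\BK(x)}$ up to $\CH$, giving $\|dx\|_\CH=|x|$.

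The proof then concludes by a short computation. Take $\omega=dx_1/x_1\in\widehat\Omega_{\CH_1/\BK}$; then $\|\omega\|_{\CH_1}=1$ by the auxiliary fact applied to $x_1$. In $\widehat\Omega_{\CH_2/\BK}$, using that $dx_2$ is a basis, $dx_1=\frac{dx_1}{dx_2}\,dx_2$, so $\omega=\frac{x_2}{x_1}\cdot\frac{dx_1}{dx_2}\cdot\frac{dx_2}{x_2}$ and hence $\|\omega\|_{\CH_2}=\big|\frac{x_2}{x_1}\cdot\frac{dx_1}{dx_2}\big|$ by the auxiliary fact applied to $x_2$. Therefore $\delta^{\ctt}(\CH_2/\CH_1)=\big|\frac{x_2}{x_1}\cdot\frac{dx_1}{dx_2}\big|$, which (using $v_{\CH_2}(x_1)=v_{\CH_1}(x_1)$, since $\CH_1\hookrightarrow\CH_2$ is isometric, and $p=3$ so $\frac{p-1}{p}=\frac23$) is precisely the asserted identity after applying the normalization $\delta=-\frac32\log\delta^{\ctt}$ from Section~\ref{sec-interior-locus}. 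The part I expect to require the most care is the auxiliary computation of $\|dx\|_\CH$ for a tame monomial parameter, since it rests on the precise definition of the norm on $\widehat\Omega$ in \cite{ctt}; I would also want to check that the characterization of $\delta^{\ctt}$ used above matches the definition given there verbatim, after which the remaining steps are formal manipulations with one-dimensional normed vector spaces.
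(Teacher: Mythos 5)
Your proposal is correct and is essentially the paper's approach: the paper's proof is literally the one-line instruction to combine \cite[Corollary 2.4.6(ii)]{ctt} (the Kähler-norm description of the different that you call the ``norm-ratio formula'') with \cite[Lemma 2.1.6(ii)]{ctt} (the fact that $\lVert dx\rVert_\CH=\lvert x\rvert$, equivalently $\lVert dx/x\rVert_\CH=1$, for a tame monomial parameter), and your argument is precisely an unpacking of those two inputs followed by the same short manipulation with the basis element $dx_1/x_1$.
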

\begin{proof}
Combine \cite[\mbox{Corollary 2.4.6(ii)}]{ctt} and \cite[\mbox{Lemma 2.1.6(ii)}]{ctt}.
\end{proof}

If $\CH=\CH(\xi)$ for a Type II point on an analytic curve, then a tame monomial parameter for $\CH$ always exists (\cite[\mbox{Theorem 2.1.3} and \mbox{Lemma 2.1.6}]{ctt}). The following lemma shows that the elements $t$ and $z$ constructed above are tame monomial parameters for every Type II point $\xi\in[x_0,\infty]$ with $\delta_\phi(\xi)>0$.

\begin{Lem}
\label{lem-tame-generators}
Let $\xi\in[x_0,\infty]$ be a Type II point with $\delta_\phi(\xi)>0$, whose unique inverse image in $Y^{\an}_\BK$ we denote by $\eta$. Then the elements $t=x-x_0\in\CH(\xi)$ and $z\in\CH(\eta)$ (constructed at the beginning of this section, with minimal polynomial \eqref{equ-doctored-minimal-polynomial}) are tame monomial parameters.
\end{Lem}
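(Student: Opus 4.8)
For $t$ there is essentially nothing to prove. By definition $\CH(\xi)$ is the completion of $\BK(x)=\BK(t)$ with respect to $v_\xi$, so $\CH(\xi)=\reallywidehat{\BK(t)}$ and $t$ is a tame parameter (the extension $\CH(\xi)/\reallywidehat{\BK(t)}$ being trivial). Moreover $\xi$, being a Type II point of $[x_0,\infty]$, equals $\xi_r$ for some $r\in\BQ$, so by the formula for $v_r$ recalled in \mbox{Section \ref{sec-delta}} one has $v_\xi\big(\sum_i a_it^i\big)=\min_i\{v_K(a_i)+ir\}$ for every polynomial in $t$, which is precisely the condition that $t$ be a monomial parameter (with $v_\xi(t)=r$). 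Hence $t$ is a tame monomial parameter of $\CH(\xi)$, and it remains to treat $z$.

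For $z$ I would first reduce to a claim about reductions. Set $s:=v_\eta(z)$ and $\bar z:=\overline{z\pi^{-s}}\in\kappa(\eta)$. From the previous step $\CH(\xi)=\reallywidehat{\BK(t)}$ and $\kappa(\xi)=\kappa(\bar t)$ is the rational function field in $\bar t:=\overline{t\pi^{-v_\xi(t)}}$. Since $\delta_\phi(\xi)>0$, \mbox{Remark \ref{rem-top-ram-closed}(c)} gives $\phi^{-1}(\xi)=\{\eta\}$ and that $\CH(\eta)/\CH(\xi)$ is wildly ramified of degree $3$ with purely inseparable residue extension, while \mbox{Remark \ref{rem-geometric-ramification-index-type-ii}} (both $\xi$ and $\eta$ being Type II over $\BK$, hence of value group $\BQ$) shows $[\kappa(\eta):\kappa(\xi)]=3$; thus $\kappa(\eta)=\kappa(\bar t)^{1/3}=\kappa(\bar t^{1/3})$. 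I claim it now suffices to prove $\bar z\notin\kappa(\xi)$. Indeed, if $\bar z\notin\kappa(\bar t)$ then by degree $\kappa(\xi)(\bar z)=\kappa(\eta)$ and $\bar z^3\in\kappa(\bar t)$; since the field of constants of $\kappa(\bar t^{1/3})$ is $\kappa$, $\bar z$ is transcendental over $\kappa$, so $z$ is a monomial parameter of $\CH(\eta)$ (an element of a Type II analytic field over $\BK$ is a monomial parameter exactly when its reduction is transcendental over $\kappa$, as one sees by lifting a putative algebraic relation); moreover $\CH(\eta)/\reallywidehat{\BK(z)}$ has trivial ramification index (both value groups equal $\BQ$, as $s\in\BQ$) and separable residue extension — writing $\kappa(\eta)=\kappa(u)$ with $u=\bar t^{1/3}$, the condition $\bar z\notin\kappa(\bar t)=\kappa(u^3)$ forces $d\bar z\ne 0$ in $\Omega_{\kappa(\eta)/\kappa}$, hence $\kappa(\eta)/\kappa(\bar z)$ separable — so $z$ is a tame parameter and therefore a tame monomial parameter.

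To prove $\bar z\notin\kappa(\bar t)$ I would analyse the reduction of the minimal polynomial \eqref{equ-doctored-minimal-polynomial}. As $v_\xi$ extends uniquely to $\CH(\eta)$, the three roots of \eqref{equ-doctored-minimal-polynomial} all have valuation $s$, so $v_\eta(C)=3s$, $v_\eta(B)\ge 2s$, $v_\eta(A)\ge s$; rescaling $T$ by $\pi^s$, the element $\bar z$ is a root of
\begin{equation*}
\overline{\tilde P}(T)=T^3+\overline{A\pi^{-s}}\,T^2+\overline{B\pi^{-2s}}\,T+\overline{C\pi^{-3s}}\in\kappa(\bar t)[T],
\end{equation*}
where $\overline{C\pi^{-3s}}$ is a nonzero polynomial in $\bar t$ whose exponents all lie in $\{1,2,4\}$ — here I use crucially that, by the construction at the start of the section, $C$ has neither a constant nor a degree-$3$ term. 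The key point is that in fact $\overline{A\pi^{-s}}=\overline{B\pi^{-2s}}=0$, so that $\overline{\tilde P}(T)=T^3+\overline{C\pi^{-3s}}$: granting this, $\overline{C\pi^{-3s}}$ is a nonzero nonconstant element of $\kappa[\bar t]$ no exponent of which is divisible by $3$, hence not a cube in $\kappa(\bar t)$, so $T^3+\overline{C\pi^{-3s}}$ is irreducible over $\kappa(\bar t)$ (we are in characteristic $3$), whence $[\kappa(\bar t)(\bar z):\kappa(\bar t)]=3$ and in particular $\bar z\notin\kappa(\bar t)$. To obtain $\overline{A\pi^{-s}}=\overline{B\pi^{-2s}}=0$ I would use that the discriminant of \eqref{equ-doctored-minimal-polynomial} with respect to $T$ equals $\Delta_F$ up to a unit (it is unaffected by the translation $x\mapsto x-x_0$ and by the affine substitutions $y\mapsto y+g$, $g\in\BK[t]$, used to reach $z$), together with $v_\xi(\Delta_F)\le 0$ — which holds because $x_0\in U=D(\Delta_F)$ and the zeros of $\Delta_F$ are branch points of $\phi$, lying on $\Gamma_{0,\BK}$ and hence outside the open disc of radius $\mu(x_0)$ about $x_0$; for $\xi$ with $\delta_\phi(\xi)>0$ this forces the reduction $\overline{\tilde P}$ to be inseparable, and since $\kappa(\eta)/\kappa(\xi)$ is purely inseparable of degree $3$, an inseparable cubic over $\kappa(\bar t)$ admitting a root generating such an extension must be $T^3+\overline{C\pi^{-3s}}$.

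The main obstacle is exactly the vanishing $\overline{A\pi^{-s}}=\overline{B\pi^{-2s}}=0$, i.e.\ the assertion that $\delta_\phi(\xi)>0$ forces the reduced minimal polynomial of $z$ at $\eta$ into the purely inseparable shape $T^3+\overline{C\pi^{-3s}}$. This is where the hypothesis $\delta_\phi(\xi)>0$ and the special normal form of \eqref{equ-doctored-minimal-polynomial} genuinely interact, and I would import it from the general results of the first three chapters of \cite{diss} (compare the analogous step in the proof of \mbox{Lemma \ref{lem-black-box}}, where "constant discriminant" is used to pin down the reduced polynomial at a point with $\delta_\phi=0$). The rest — the reduction to $\bar z\notin\kappa(\bar t)$, the ramification and separability bookkeeping, and the irreducibility of $T^3+\overline{C\pi^{-3s}}$ — is routine.
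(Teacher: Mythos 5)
Your handling of $t$ and the reduction to $\overline{z}$ are fine, and your routes to the monomial property (via transcendence of $\overline{z}$ over $\kappa$) and to tameness (via $d\overline{z}\ne0$, i.e.\ $\overline{z}$ a separating element) are valid alternatives to the paper's arguments, which use MacLane's theory of key polynomials and the fact that $\overline{z}^3=\overline{C\pi^{-3s}}$ has degree $\le 4$ in $\overline{t}$, so that $[\kappa(\eta):\kappa(\overline{z})]$ divides $4$. But the vanishing $\overline{A\pi^{-s}}=\overline{B\pi^{-2s}}=0$, which you rightly flag as the main obstacle, is not established by your sketch, and the paper in fact closes it with a short self-contained argument you do not need to import from \cite{diss}. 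Your reasoning is off on two counts: an inseparable cubic need not have vanishing middle coefficients — for instance $(T-1)^2(T-2)=T^3+2T^2+2T+1$ in characteristic $3$ — so one must know $\overline{\tilde P}$ is \emph{irreducible} before concluding it has the shape $T^3+c$; and your appeal to ``$\overline{z}$ generating a degree-$3$ purely inseparable extension'' presupposes $\overline{z}\notin\kappa(\overline{t})$, which is exactly what you set out to prove, so the argument is circular. Nor is $v_\xi(\Delta_F)\le 0$ the governing condition: separability of the reduction corresponds to $v_\xi(\Delta_F)=6s$ rather than $>6s$, and the bound $\le 0$ does not decide this.

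The paper resolves this by establishing irreducibility of $\overline{F}$ \emph{first}: Hensel's Lemma shows that $\overline{F}$ is a power of a single irreducible factor; were that factor linear, $\overline{F}=(T-\alpha)^3=T^3-\alpha^3$ would make $\overline{C\pi^{-3s}}$ a cube in $\kappa(\overline{t})$, contradicting the normal form of $C$, whose reduction has exponents only in $\{1,2,4\}$. Hence $\overline{F}$ is irreducible and is the minimal polynomial of $\overline{z}$ over $\kappa(\xi)$. Only then does $\delta_\phi(\xi)>0$ yield, via Remark~\ref{rem-top-ram-closed}(c), that $\kappa(\eta)/\kappa(\xi)$ is inseparable, forcing $\overline{F}'=0$ and hence the vanishing of the middle coefficients. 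Irreducibility must precede inseparability in the logic; your proposal attempts the reverse and stalls.
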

\begin{proof}
For $t$ this is clear. Indeed, $\CH(\xi)$ actually equals $\reallywidehat{\BK(t)}$, so $t$ is in particular a tame parameter. And since $v_\xi$ is a Gauss valuation centered at $x_0$, it follows that $t=x-x_0$ is a monomial parameter. Let us write
\begin{equation*}
\overline{z}\coloneqq\overline{z\pi^{-s}},\qquad s=\frac{v_\xi(C)}{3}.
\end{equation*}
Since $\phi^{-1}(\xi)=\{\eta\}$, the Newton polygon of the polynomial
\begin{equation}\label{equ-not-reduction-polynomial}
F=T^3+A\pi^{-s}T^2+B\pi^{-2s}T+C\pi^{-3s}\in\CH(\xi)[T]
\end{equation}
is a straight line. Thus it makes sense to define the reduction
\begin{equation}\label{equ-reduction-polynomial}
\overline{F}=T^3+\overline{A\pi^{-s}}T^2+\overline{B\pi^{-2s}}T+\overline{C\pi^{-3s}}\in\kappa(\xi)[T].
\end{equation}
Since $z\pi^{-s}$ is a zero of \eqref{equ-not-reduction-polynomial} it follows that $\overline{z}$ is a zero of \eqref{equ-reduction-polynomial}. Hensel's Lemma shows that $\overline{F}$ cannot have more than one distinct irreducible factor (a factorization into distinct irreducible factors would lift to a factorization of $F$). Suppose first that the irreducible factor of $\overline{F}$ is linear, say
\begin{equation*}
\overline{F}=(T-\alpha)^3=T^3-\alpha^3.
\end{equation*}
This is not possible, since $\overline{C\pi^{-3s}}$ is by construction of $C$ not a third power. Thus $\overline{F}$ must be irreducible and $\overline{z}$ is a generator of the field extension $\kappa(\eta)/\kappa(\xi)$ with minimal polynomial $\overline{F}$. Since $\delta_\phi(\xi)>0$, the extension $\kappa(\eta)/\kappa(\xi)$ is inseparable, which implies that $\overline{A\pi^{-s}}=\overline{B\pi^{-2s}}=0$.

To show that $z$ is a monomial parameter, we study the extension $v_\eta$ of $v_\xi$ to $\BK (Y)$. We use MacLane's theory of inductive valuations (as explained in \cite{maclane} or \cite[Chapter 4]{rueth}) to show that $v_\eta$ is the Gauss valuation
\begin{equation*}
\sum_iA_i(z\pi^{-s})^i\mapsto\min_i\big\{v_\xi(A_i)\big\},\qquad A_i\in\BK(t).
\end{equation*}
As explained in \cite[\mbox{Section 5}]{maclane}, it suffices to show that $F$ is a \emph{key polynomial} (\cite[\mbox{Section 2}]{maclane}) over the Gauss valuation on the polynomial ring $\BK(t)[T]$,
\begin{equation*}
\sum_iA_iT^i\mapsto\min_i\big\{v_\xi(A_i)\big\},\qquad A_i\in \BK (t).
\end{equation*}
But according to \cite[\mbox{Lemma 4.8}]{rueth}, the fact that $\overline{F}$ is irreducible implies that $F$ is a key polynomial. Thus $z$ is a monomial parameter.

To show that $z$ is a tame parameter, we show that the degree of $\CH(\eta)/\reallywidehat{\BK (z)}$ is coprime to $3$. Note that the residue field $\kappa(\eta)$ is generated over the residue field of $\BK $ by $\overline{z}$ and $\overline{t}=\overline{t\pi^{-v_\xi(t)}}$. Since
\begin{equation*}
\overline{z}^3=\overline{C\pi^{-3s}},
\end{equation*}
where $\overline{C\pi^{-3s}}=\overline{c}_4\overline{t}^4+\overline{c}_2\overline{t}^2+\overline{c}_1\overline{t}$ for certain constants $\overline{c}_4,\overline{c}_2,\overline{c}_1$, it follows that the field extension $\kappa(\eta)/\kappa(\overline{z})$ has degree dividing $4$. This degree equals the degree $\CH(\eta)/\reallywidehat{K(z)}$ (by the same argument as in \mbox{Remark \ref{rem-geometric-ramification-index-type-ii}}). Thus $z$ is a tame parameter as well.
\end{proof}

\begin{Kor}\label{cor-concrete-delta-formula}
We have
\begin{equation}\label{equ-concrete-delta-formula}
\frac{2}{3}\delta_\phi(\xi)=\min\big\{v_K(3),v_\xi(A)-\frac{v_\xi(C)}{3},v_\xi(B)-\frac{2v_\xi(C)}{3}\big\},
\end{equation}
where $A,B,C$ denote the coefficients of the minimal polynomial \eqref{equ-doctored-minimal-polynomial} of the parameter $z$.
\end{Kor}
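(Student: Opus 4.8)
Under the standing assumption $\delta_\phi(\xi)>0$ (so that $\phi^{-1}(\xi)=\{\eta\}$ and $[\CH(\eta):\CH(\xi)]=3$), the plan is to feed the tame monomial parameters produced by Lemma~\ref{lem-tame-generators} into Proposition~\ref{prop-formula-for-delta}. Concretely, I take $\CH_1\coloneqq\CH(\xi)$ with parameter $x_1\coloneqq t$ and $\CH_2\coloneqq\CH(\eta)$ with parameter $x_2\coloneqq z$, so that $\frac{2}{3}\delta_\phi(\xi)=v_\eta(\tfrac{dt}{dz})+v_\eta(z)-v_\xi(t)$. Here $v_\eta(z)=s$ with $s\coloneqq\tfrac13 v_\xi(C)$, as recorded in the proof of Lemma~\ref{lem-tame-generators}. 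So the whole thing reduces to computing $v_\eta(\tfrac{dt}{dz})$, and for this I differentiate the relation $h(z)=0$, where $h(T)=T^3+AT^2+BT+C$ is the minimal polynomial~\eqref{equ-doctored-minimal-polynomial} of $z$ and $A,B,C\in\BK[t]$: this gives $\tfrac{dt}{dz}=-\dfrac{3z^2+2Az+B}{A'z^2+B'z+C'}$, where $A',B',C'$ are $t$-derivatives, hence $v_\eta(\tfrac{dt}{dz})=v_\eta(3z^2+2Az+B)-v_\eta(A'z^2+B'z+C')$.

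The numerator is the easy part. Its three summands have $v_\eta$-values $v_K(3)+2s$, $v_\xi(A)+s$ and $v_\xi(B)$, i.e. $2s$ plus the three numbers $v_K(3)$, $v_\xi(A)-s$, $v_\xi(B)-2s$ occurring in the asserted formula; all three are strictly positive, since $\overline{A\pi^{-s}}=\overline{B\pi^{-2s}}=0$ by Lemma~\ref{lem-tame-generators} and $v_K(3)>0$ because the residue characteristic is $3$. Writing $M$ for their minimum, I pass to $\kappa(\eta)$: the class of $(3z^2+2Az+B)\pi^{-2s-M}$ is a $\kappa(\xi)$-linear combination of $\overline z^2$, $\overline z$ and $1$ whose coefficient on whichever term realizes the minimum is a unit; since $\overline F$ is irreducible (again Lemma~\ref{lem-tame-generators}), $1,\overline z,\overline z^2$ are $\kappa(\xi)$-linearly independent, so this class is nonzero. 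Hence $v_\eta(3z^2+2Az+B)=2s+M$ (and $M<\infty$ because $\phi$ is separable, so $3z^2+2Az+B=h'(z)\neq0$).

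The denominator is where the construction of $z$ pays off. The key point is that $C=c_4t^4+c_2t^2+c_1t$ involves only $t$-exponents $4,2,1$, none divisible by $p=3$, so — $v_\xi$ being the Gauss valuation in $t$ — differentiation lowers its value by exactly $v_\xi(t)$: $v_\xi(C')=v_\xi(C)-v_\xi(t)=3s-v_\xi(t)$. On the other hand, spelling out $v_\xi(A)>s$ and $v_\xi(B)>2s$ in terms of the coefficients $a_i,b_i$ and using $p=3$ once more (so that the $3b_3t^3$-contribution to $B'$ is harmless), one gets $v_\xi(A')>s-v_\xi(t)$ and $v_\xi(B')>2s-v_\xi(t)$. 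Therefore $v_\eta(A'z^2)=v_\xi(A')+2s$ and $v_\eta(B'z)=v_\xi(B')+s$ both exceed $3s-v_\xi(t)$, the $C'$-term strictly dominates, and $v_\eta(A'z^2+B'z+C')=3s-v_\xi(t)$.

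Assembling the pieces gives $\frac{2}{3}\delta_\phi(\xi)=(2s+M)-(3s-v_\xi(t))+s-v_\xi(t)=M=\min\{v_K(3),\,v_\xi(A)-\tfrac13 v_\xi(C),\,v_\xi(B)-\tfrac23 v_\xi(C)\}$, which is the claim. The main obstacle is the denominator estimate of the third paragraph: it is precisely there that the special shape of $C$ together with the equality $p=3$ is used, and the inequalities $v_\xi(A')>s-v_\xi(t)$ and $v_\xi(B')>2s-v_\xi(t)$ must be checked with a little care, in both possible characteristics of $K$ (note $v_K(3)=\infty$ when $\characteristic(K)=3$, in which case the term $3z^2$ simply disappears and the argument goes through unchanged).
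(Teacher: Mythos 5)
Your proof is correct and follows essentially the same route as the paper: it applies Proposition~\ref{prop-formula-for-delta} with the tame monomial parameters $t,z$ from Lemma~\ref{lem-tame-generators}, computes $\tfrac{dt}{dz}$ by implicit differentiation, and then evaluates the valuations of numerator and denominator using the key facts that $\overline{A\pi^{-s}}=\overline{B\pi^{-2s}}=0$ and that $C$ has only $t$-exponents $1,2,4$ (none divisible by $3$), so that $v_\xi(C')=v_\xi(C)-v_\xi(t)$ dominates. The only cosmetic difference is that you re-derive the Gauss-valuation identity $v_\eta(\sum A_iz^i)=\min_i\{v_\xi(A_i)+is\}$ for the numerator via $\kappa(\xi)$-linear independence of $1,\overline z,\overline z^2$, whereas the paper cites the MacLane augmented-valuation structure established in Lemma~\ref{lem-tame-generators} (step (iv) of the paper's proof).
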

\begin{proof}
This is now simply a calculation. We first collect all the ingredients we need:

\begin{enumerate}[(i)]
\item It follows from \mbox{Proposition \ref{prop-formula-for-delta}} and \mbox{Lemma \ref{lem-tame-generators}} that
\begin{equation*}
\frac{2}{3}\delta_\phi(\xi)=v_\eta(\tfrac{dt}{dz})+v_\eta(z)-v_\xi(t).
\end{equation*}
\item Differentiating the equation $0=F(z)$ shows 
\begin{equation*}
0=dF=F_zdz+F_tdt=(3z^2+2Az+B)dz+(A'z^2+B'z+C')dt,
\end{equation*}
so
\begin{equation*}
\frac{dt}{dz}=-\frac{3z^2+2Az+B}{A'z^2+B'z+C'}.
\end{equation*}
\item Since $t$ is a monomial parameter we have
\begin{equation*}
v_\xi(A't)\ge v_\xi(A),\qquad v_\xi(B't)\ge v_\xi(B),\qquad v_\xi(C't)=v_\xi(C).
\end{equation*}
To illustrate the reason that the first two inequalities are not equalities in general, suppose that $v_\xi(B)=v_\xi(b_it^i)$ for $i=0$ or $i=3$,
while $v_\xi(B)<v_\xi(b_jt^j)$ for $j\ne i$. Then 
\begin{equation*}
B't=3b_3t^3+2b_2t^2+b_1t
\end{equation*}
has strictly larger valuation than $B$. The same reasoning applies to $A$, but not to $C=c_4t^4+c_2t^2+c_1t$, whence the equality $v_\xi(C'
t)=v_\xi(C)$.
\item Since $v_\eta$ is an infinite augmented valuation over a Gauss valuation (see the proof of \mbox{Lemma \ref{lem-tame-generators}}), we have
\begin{equation*}
v_\eta\big(\sum_iA_iz^i\big)=\min_i\big\{v_\xi(A_i)+iv_\eta(z)\big\},\qquad A_i\in \BK (t).
\end{equation*}
\item Because $\overline{A\pi^{-s}}=\overline{B\pi^{-2s}}=0$, we have
\begin{equation*}
v_\eta(z)=\frac{v_\xi(C)}{3}<\frac{v_\xi(B)}{2},\qquad v_\eta(z)=\frac{v_\xi(C)}{3}<v_\xi(A).
\end{equation*}
\item Combining (iii) and (v) shows
\begin{equation*}
\frac{v_\xi(C't)}{3}=\frac{v_\xi(C)}{3}<\frac{v_\xi(B)}{2}\le\frac{v_\xi(B't)}{2},
\end{equation*}
and similarly $v_\xi(C't)/3=v_\xi(C)/3<v_\xi(A't)$.
\end{enumerate}

Putting all this together, we can compute:
\begin{IEEEeqnarray*}{Clr}
&\frac{2}{3}\delta_\phi(\xi)\\=&v_\eta(\tfrac{dt}{dz})+v_\eta(z)-v_\xi(t)&\textrm{(by (i))}\\
=&v_\eta(3z^2+2Az+B)-v_\eta(A'z^2+B'z+C')+v_\eta(z)-v_\eta(t)\qquad\qquad\qquad&\textrm{(by (ii))}\\
=&v_\eta(3z^2+2Az+B)-v_\eta(A'tz^2+B'tz+C't)+v_\eta(z)&\\
=&\min\{v_\eta(3z^2),v_\eta(2Az),v_\eta(B)\}&\textrm{(by (iv))}\\
&-\min\{v_\eta(A'tz^2),v_\eta(B'tz),v_\eta(C't)\}+v_\eta(z)\qquad&\\
=&\min\big\{v_K(3)+\frac{2v_\xi(C)}{3},v_\xi(A)+\frac{v_\xi(C)}{3},v_\xi(B)\big\}&\textrm{(by (v))}\\&-\min\big\{v_\xi(A't)+\frac{2v_\xi(C)}{3},v_\xi(B't)+\frac{v_\xi(C)}{3},v_\xi(C't)\big\}+\frac{v_\xi(C)}{3}\\
=&\min\big\{v_K(3)+\frac{2v_\xi(C)}{3},v_\xi(A)+\frac{v_\xi(C)}{3},v_\xi(B)\big\}-v_\xi(C)+\frac{v_\xi(C)}{3}&\textrm{(by (vi))}\\
=&\min\big\{v_K(3), v_\xi(A)-\frac{v_\xi(C)}{3},v_\xi(B)-\frac{2v_\xi(C)}{3}\big\}&
\end{IEEEeqnarray*}
\end{proof}

\begin{Rem}\label{rem-final-delta-formula}
We have derived \mbox{Corollary \ref{cor-concrete-delta-formula}} under the assumption that $\xi\in[x_0,\infty]$ is a Type II point with $\delta_\phi(\xi)>0$. If $\delta_\phi(\xi)=0$, then the right-hand side of \eqref{equ-concrete-delta-formula} is $\le0$. To see this, it suffices to show that
\begin{equation*}
v_\xi(A)\le\frac{v_\xi(C)}{3}\qquad\textrm{or}\qquad\frac{v_\xi(B)}{2}\le\frac{v_\xi(C)}{3}.
\end{equation*}
If neither one of these inequalities were true, then the polynomial $\overline{F}$ from \eqref{equ-reduction-polynomial} would take the form
\begin{equation*}
\overline{F}=T^3+\overline{C\pi^{-3s}}.
\end{equation*}
Since $\overline{C\pi^{-3s}}$ is by construction not a third power, this means that the extension of residue fields $\kappa(\eta)/\kappa(\xi)$ is inseparable, which is impossible unless $\delta_\phi(\xi)>0$ (\mbox{Remark \ref{rem-top-ram-closed}(c)}).

All in all we thus see that for \emph{any} Type II point $\xi\in[x_0,\infty]$ we have
\begin{equation*}
\delta_\phi(\xi)=\max\Big\{0,\min\big\{\frac{3}{2},\frac{3v_\xi(A)}{2}-\frac{v_\xi(C)}{2},\frac{3v_\xi(B)}{2}-v_\xi(C)\big\}\Big\}.
\end{equation*}
By continuity of $\delta_\phi$, this equality extends to any point $\xi\in[x_0,\infty]$ outright.
\end{Rem}

\section{Implementation}\label{sec-implementation}

\noindent We have seen in the previous section how to compute $\delta_\phi$ on a given interval in $(\BP^1_{\BK })^{\an}$. Applying results from \cite{micu}, we now explain how to use this to describe $\delta_\phi$ on $(\BP^1_K)^{\an}$. We denote by $\pi$ the natural base change map $(\BP_\BK^1)^{\an}\to(\BP_K^1)^{\an}$.

Let us consider a closed point in $\BP^1_K\setminus\{\infty\}=\Spec K[x]$, say the one corresponding to the monic irreducible polynomial $\psi\in K[x]$. Then the points in $\BP_{\BK }^1$ above this closed point correspond to the $s\coloneqq\deg(\psi)$ zeros of $\psi$ in $\BK $. We write
\begin{equation}\label{equ-numbering-of-roots}
\psi=\prod_{i=1}^s(x-\alpha_i)^e,\qquad\alpha_1,\ldots,\alpha_s\in \BK ,\quad e\ge1,
\end{equation}
and for $r\in\BR\cup\{\infty\}$ define the sets
\begin{equation*}
I_r\coloneqq\{i\in\{1,\ldots,s\}\mid v_K(\alpha_1-\alpha_i)\ge r\},
\end{equation*}
\begin{equation*}
J_r\coloneqq\{i\in\{1,\ldots,s\}\mid v_K(\alpha_1-\alpha_i)<r\}.
\end{equation*}
Now following \cite[\mbox{Definition 2.25}]{micu}, we define the function
\begin{equation*}
\theta_\psi\colon\BR\cup\{\infty\}\to\BR\cup\{\infty\},\qquad r\mapsto er\abs{I_r}+e\sum_{i\in J_r}v_K(\alpha_1-\alpha_i).
\end{equation*}
The function $\theta_\psi$ is independent of the numbering of the roots in \eqref{equ-numbering-of-roots} (\cite[\mbox{Lemma 2.27}]{micu}) and is strictly monotonically increasing (\cite[\mbox{Lemma 2.28}]{micu}).

\begin{Prop}\label{prop-discoid-splitting}
For any $r\in\BQ$ we have
\begin{equation*}
\pi^{-1}(\rD[\psi,\theta_\psi(r)])=\bigcup_{i=1}^s\rD[\alpha_i,r].
\end{equation*}
\end{Prop}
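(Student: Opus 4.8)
The plan is to verify the claimed set equality pointwise on $(\BP^1_\BK)^{\an}$ after translating everything into additive valuations. A point $\xi\in(\BP^1_\BK)^{\an}$ lies in $\pi^{-1}(\rD[\psi,\theta_\psi(r)])$ precisely when $v_{\pi(\xi)}(\psi)\ge\theta_\psi(r)$; since $\psi\in K[x]$ we have $v_{\pi(\xi)}(\psi)=v_\xi(\psi)$, and the factorization \eqref{equ-numbering-of-roots} over $\BK$ gives $v_\xi(\psi)=e\sum_{i=1}^s v_\xi(x-\alpha_i)$. On the other hand $\xi\in\bigcup_{i=1}^s\rD[\alpha_i,r]$ precisely when $v_\xi(x-\alpha_i)\ge r$ for some $i$. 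So the assertion reduces to showing
\[
e\sum_{i=1}^s v_\xi(x-\alpha_i)\ge\theta_\psi(r)\quad\Longleftrightarrow\quad\max_{1\le i\le s} v_\xi(x-\alpha_i)\ge r.
\]
The type-$1$ points $\xi=\alpha_j$ and $\xi=\infty$ are trivial (in each case $\xi$ lies in the left-hand side of the claimed equality if and only if it lies in the right-hand side), so I may assume every $v_\xi(x-\alpha_i)$ is finite; as there are finitely many roots, $\rho\coloneqq\max_i v_\xi(x-\alpha_i)$ is attained, say at $i=1$ after renumbering. The renumbering is harmless since $\theta_\psi$ is independent of the ordering of the roots (\cite[\mbox{Lemma 2.27}]{micu}).

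For the inclusion $\bigcup_i\rD[\alpha_i,r]\subseteq\pi^{-1}(\rD[\psi,\theta_\psi(r)])$, I take $\xi$ with $v_\xi(x-\alpha_1)\ge r$ and write $x-\alpha_i=(x-\alpha_1)+(\alpha_1-\alpha_i)$. The ultrametric inequality then gives $v_\xi(x-\alpha_i)\ge r$ whenever $v_K(\alpha_1-\alpha_i)\ge r$ (i.e.\ $i\in I_r$), and $v_\xi(x-\alpha_i)=v_K(\alpha_1-\alpha_i)$ whenever $v_K(\alpha_1-\alpha_i)<r\le v_\xi(x-\alpha_1)$ (i.e.\ $i\in J_r$), the latter because the two summands then have distinct valuations. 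Summing over $i$ and multiplying by $e$ yields $v_\xi(\psi)\ge e\lvert I_r\rvert r+e\sum_{i\in J_r}v_K(\alpha_1-\alpha_i)=\theta_\psi(r)$.

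For the reverse inclusion I argue by contraposition: assuming $v_\xi(x-\alpha_i)<r$ for all $i$ (so $\rho<r$), I rewrite $\alpha_1-\alpha_i=(x-\alpha_i)-(x-\alpha_1)$ and use $v_\xi(x-\alpha_1)=\rho\ge v_\xi(x-\alpha_i)$ to obtain $v_\xi(x-\alpha_i)\le\min\{\rho,\,v_K(\alpha_1-\alpha_i)\}$ for every $i$; for $i\in I_r$ the minimum equals $\rho$ (since $v_K(\alpha_1-\alpha_i)\ge r>\rho$), while for $i\in J_r$ it is at most $v_K(\alpha_1-\alpha_i)$. Hence $\sum_i v_\xi(x-\alpha_i)\le\lvert I_r\rvert\rho+\sum_{i\in J_r}v_K(\alpha_1-\alpha_i)$, and since $1\in I_r$ gives $\lvert I_r\rvert\ge1$ while $\rho<r$, this is strictly less than $\lvert I_r\rvert r+\sum_{i\in J_r}v_K(\alpha_1-\alpha_i)$; multiplying by $e$ gives $v_\xi(\psi)<\theta_\psi(r)$, so $\xi\notin\pi^{-1}(\rD[\psi,\theta_\psi(r)])$.

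I expect the only delicate point to be the bookkeeping with the ultrametric inequality in the borderline cases where $v_\xi(x-\alpha_1)$ and $v_K(\alpha_1-\alpha_i)$ coincide, where one gets only an inequality in place of an equality: in the first inclusion such indices automatically lie in $I_r$ and still contribute at least $r$, and in the second they are absorbed by the coarser bound $v_\xi(x-\alpha_i)\le v_K(\alpha_1-\alpha_i)$. Apart from the definition of $\theta_\psi$ and its independence of the numbering of the roots, no further input is required.
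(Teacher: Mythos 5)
Your proof is correct. Both inclusions are handled cleanly: in the first you correctly split according to whether the summands $(x-\alpha_1)$ and $(\alpha_1-\alpha_i)$ have distinct valuations (giving equality with the smaller one for $i\in J_r$) or not (giving the bound $\ge r$ for $i\in I_r$); in the second you correctly obtain the upper bound $v_\xi(x-\alpha_i)\le\min\{\rho,v_K(\alpha_1-\alpha_i)\}$ and exploit $1\in I_r$ to turn $\rho<r$ into a strict inequality. The reduction to finite valuations is justified since $v_\xi(x-\alpha_i)=\pm\infty$ only occurs at the roots $\alpha_j$ and at $\infty$, which you dispose of separately, and the appeal to the root-independence of $\theta_\psi$ makes the per-point renumbering legitimate.

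Where you differ from the paper: the paper gives no argument at all, simply citing the statement as a special case of a more general theorem in the reference from which the whole $\theta_\psi$ machinery is imported. Your proof is a direct, self-contained ultrametric computation specific to the case at hand, which trades the generality and brevity of the citation for transparency and independence from the external source. For a reader who wants to see why $\theta_\psi$ is the ``right'' function relating radii upstairs and downstairs, your argument is the more illuminating one; for the paper's purposes the citation suffices because the reference is already being used for the definition of $\theta_\psi$ and its basic properties.
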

\begin{proof}
This is a special case of \cite[\mbox{Theorem 2.29}]{micu}.
\end{proof}

Now that we have assembled all the tools we need, the following algorithm for computing $\phi$-semistable models presents itself.

\begin{Algo}\label{alg-interior-discoid-locus}Input: A smooth plane quartic curve $Y$ over $K$, equipped with a degree-$3$ morphism $\phi\colon Y\to\BP^1_{K}$, given in the normal form discussed in \mbox{Section \ref{sec-quartics}}.
\begin{enumerate}[(1)]
\item For each finite branch point $P\in\BP^1_K$ of $\phi$, do the following steps to compute a subdomain $\rU_P$ associated to $P$:
\begin{itemize}
\item[(1a)] Choose a root $x_0\in \BK $ of $\psi$, where $\psi\in K[x]$ is the monic irreducible polynomial generating the maximal ideal in $K[x]$ corresponding to $P$.
\item[(1b)] Write $t\coloneqq x-x_0$. As in \mbox{Section \ref{sec-interior-locus}}, find a generator of $\BK (Y)$ whose minimal polynomial over $\BK (t)$, say
\begin{equation*}
T^3+AT^2+BT+C,\qquad A,B,C\in \BK [t],
\end{equation*}
has coefficients of the form $A=a_2t^2+a_1t+a_0$, $B=b_3t^3+b_2t^2+b_1t+b_0$, and $C=c_4t^4+c_2t^2+c_1t$. Note that by Remark \ref{rem-everything-rational} we may assume all coefficients of $A,B,C$ to lie in a finite intermediate extension $\BK /L/K$.
\item[(1c)] Using the family of Gauss valuations
\begin{equation*}
v_r\colon \BK(t)\to\BR\cup\{\infty\},\qquad\sum_id_it^i\mapsto\min_i\{v_K(d_i)+ir\},\qquad r\in\BR\cup\{\infty\},
\end{equation*}
we compute $\delta_\phi$ on the path $[x_0,\infty]$:
\begin{equation*}
\delta_\phi(\xi_r)=\max\Big\{0,\min\big\{\frac{3}{2}v_K(3),\frac{3v_r(A)}{2}-\frac{v_r(C)}{2},\frac{3v_r(B)}{2}-v_r(C)\big\}\Big\}.
\end{equation*}
\item[(1d)] Define
\begin{equation*}
\rU_P\coloneqq \ret_{\Gamma_0}^{-1}\big(\{\xi_{\psi,\theta_\psi(r)}\mid r\in[-\infty,\infty]:\delta_\phi(\xi_r)=0\}\big),
\end{equation*}
where $\Gamma_0\subset (\BP^1_K)^{\an}$ is as in \mbox{Section \ref{sec-tame-locus}}, and where $\xi_{\psi,\theta_\psi(r)}$ denotes the boundary point of $\rD[\psi,\theta_\psi(r)]$.
\end{itemize}
\item Compute the subdomain $\rU$ as explained in \mbox{Section \ref{sec-tame-locus}}.
\item Then $\rU^{\interior}=\bigcup_P\rU_P$ and $\rU^{\tail}=\rU\setminus\rU^{\interior}$.
\item Take a potentially semistable model $\CX$ of $\BP^1_K$ separating the branch locus of $\phi$ and the point $\infty$ and supporting all valuations corresponding to boundary points of $\rU^{\tail}$ and $\rU^{\interior}$.
\end{enumerate}
Then $\CX$ is potentially $\phi$-semistable.
\end{Algo}
\begin{proof}[Proof of correctness of the algorithm]
The interior locus $\rU^{\interior}$ equals the retraction
\begin{equation*}
\ret_{\Gamma_0}^{-1}\big(\{\xi\in\Gamma_0\mid\delta_\phi(\xi)=0\}\big).
\end{equation*}
The inverse image of $\Gamma_0$ in $(\BP^1_\BK)^{\an}$ is the union of the intervals $[x_0,\infty]$ considered in the algorithm. Thus the equality $\rU^{\interior}=\bigcup_P\rU_P$ follows from \mbox{Proposition \ref{prop-discoid-splitting}}; the correctness of the formula for $\delta_\phi$ in Step (1d) was established in \mbox{Remark \ref{rem-final-delta-formula}}.

Now the equality $\rU^{\tail}=\rU\setminus\rU^{\interior}$ follows from \mbox{Theorem \ref{thm-main}} and the definition of $\rU^{\tail}$. That the algorithm indeed yields a potentially $\phi$-semistable model is immediate from \mbox{Proposition \ref{prop-potentially-phi-semistable}}. 
\end{proof}

\mbox{Algorithm \ref{alg-interior-discoid-locus}} is implemented in a branch of the MCLF SageMath package (\cite{mclf}), available on Github:
\begin{center}
\href{https://github.com/oossen/mclf/tree/plane-quartics}{https://github.com/oossen/mclf/tree/plane-quartics}
\end{center}
Many concrete examples are discussed in \cite[\mbox{Section 5.5}]{diss}, including the SageMath prompts used. We discuss one example below.

Once we have found a $\phi$-semistable model $\CX$ of $\BP_K^1$, we can use the existing functionality of the MCLF package to compute the semistable reduction of $Y$. In short, MCLF determines a field extension $L/K$ over which $Y$ has semistable reduction and then computes the extensions of the valuations corresponding to $\CX$ to the function field $L(Y)$. We refer to \cite[\mbox{Section 5.4}]{diss} for details.

\begin{Ex}\label{ex-worked-example-1}Consider the plane quartic over $K=\BQ_3$
\begin{equation*}
Y\colon\quad F(y)=y^3 + (2x^3 + 3x^2)y - 3x^4 - 2x^2 - 1=0.
\end{equation*}
Applying \mbox{Algorithm \ref{alg-interior-discoid-locus}} to this example reveals that the tame locus associated to $Y$ is a union of three discoids,
\begin{equation*}
\rU^{\tame}=\rD[x,3/4]\cup\rD[1/x,0]\cup\rD[\Delta_F,6].
\end{equation*}
(The SageMath commands used to obtain this may be found in \cite[\mbox{Example 5.15}]{diss}.) The tail locus is the discoid $\rD[x,3/4]$; the interior locus consists of a disk centered at the branch point $\infty$ and of a discoid containing the only finite branch point, which is cut out by the discriminant $\Delta_F$, an irreducible polynomial of degree $9$.

The tree spanned by the boundary points of the three discoids making up the tame locus may be visualized as in \mbox{Figure \ref{fig-1}}. The locus where $\delta_\phi>0$ is in red.

\begin{figure}[H]\centering\includegraphics[scale=0.4]{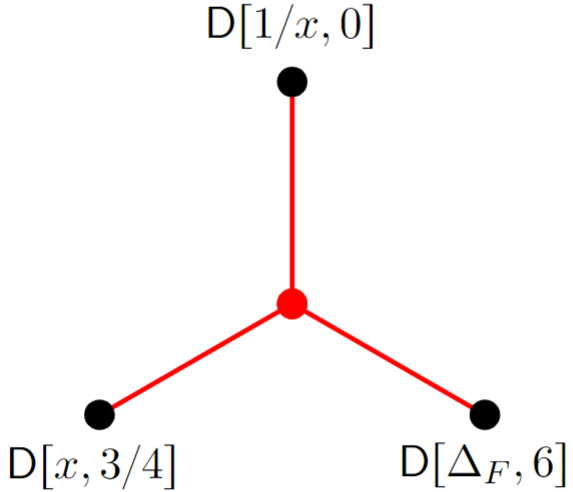}\caption{The tree in in $(\BP^1_K)^{\an}$ spanned by the boundary points of the tame locus associated to the tame locus of the curve considered in \mbox{Example \ref{ex-worked-example-1}}. This figure is adapted from a figure in \cite[\mbox{Example 5.15}]{diss}}\label{fig-1}
\end{figure}

It follows easily from \mbox{Proposition \ref{prop-discoid-splitting}} that $\rD[\Delta_F,6]$ splits into $9$ disjoint disks over $\BK$. Thus the boundary points of these disks play no role in finding a semistable reduction of $Y$. Using the functionality of the MCLF SageMath package (\cite{mclf}) for computing normalized base changes, one finds that the model corresponding to the boundary points of the two disks $\rD[x,0]$ and $\rD[x,3/4]$ indeed is potentially $\phi$-semistable. The reduction curve of the point in $Y_\BK^{\an}$ above the boundary point of $\rD[x,3/4]$ is the Artin--Schreier curve of genus $1$ with equation
\begin{equation*}
y^3+y+x^2=0.
\end{equation*}
The reduction curve of the point in $Y^{\an}_\BK$ above the boundary point of $\rD[x,0]$ is of genus $2$ and is given by 
\begin{equation*}
y^3-x^3y+x^2-1=0.
\end{equation*}
We note that it has ordinary ramification over $x=\infty$ (the specialization of the ordinary branch point $\infty$ of $\phi$) and that it is totally ramified over $x=0$ (reflecting the fact that $\phi$ is wildly ramified on the branch pointing towards $x=0$).

In summary, we have found one component of genus $1$ and one component of genus $2$ on the special fiber of the model determined by the boundary points of the tame locus. To obtain the stable reduction, we can contract all other components. This corresponds to discarding the central red vertex and the boundary point of $\rD[\Delta_F,6]$ in the tree above, keeping only the boundary points of $\rD[1/x]$ and $\rD[x,3/4]$.

The resulting reduction type is given by a curve of genus $1$ and a curve of genus $2$, meeting transversely in a single point. We note that it is not difficult to compute the thickness of this node; see \cite[End of \mbox{Section 2}]{ossen} for a worked example.
\end{Ex}

\section{An example over $\BQ_3(\zeta_3)$}\label{sec-modular-curve-example}

\noindent In this section we apply our results to a slightly more involved example. Namely, we compute the semistable reduction of a particular plane quartic curve $Y$ that appears in the attempts of Rouse, Sutherland, and Zureick-Brown to compute the rational points on the non-split Cartan modular curve $X_{\ns}^+(27)$, see \cite{rouse-sutherland-zureickbrown}. In upcoming work, Balakrishnan, Jha, Hast, and Müller use the reduction to compute the rational points of $Y$.

Let $K=\BQ_3(\zeta_3)$ be the field obtained by adjoining to $\BQ_3$ a primitive third root of unity. We normalize the valuation $v_K\colon \BK \to\BQ\cup\{\infty\}$ so that $v_K(3)=1$, that is to say $v_K(\zeta_3-1)=1/2$. The curve we are interested in is the plane quartic $K$-curve cut out of $\BP_{K}^2$ by the equation
\begin{equation*}
\label{eq-quartic-as-given}
\begin{aligned}
x^4 &+ (\zeta_3 - 1)x^3y + (3\zeta_3 + 2)x^3z - 3x^2z^2 + (2\zeta_3 + 2)xy^3 - 3\zeta_3 xy^2z&\\
&+ 3\zeta_3 xyz^2 - 2\zeta_3 xz^3 - \zeta_3 y^3z + 3\zeta_3 y^2z^2 + (-\zeta_3 + 1)yz^3 + (\zeta_3 + 1)z^4=0.
\end{aligned}
\end{equation*}
Note that $[0:1:0]$ is a rational point on $Y$. We achieve the normal form discussed in \mbox{Section \ref{sec-quartics}} by replacing $z$ with $z+x(2\zeta_3+2)/\zeta_3$. Dehomogenizing yields the affine equation
\begin{equation*}
Y\colon\quad F(y)=y^3+Ay^2+By+C=0,
\end{equation*}
where
\begin{equation*}
A=(6\zeta_3 + 12)x^2+ (36\zeta_3 + 9)x- 27,
\end{equation*}
\begin{equation*}
B=(9\zeta_3 - 18)x^3+ (-108\zeta_3 - 108)x^2+ (-162\zeta_3 + 81)x+ (81\zeta_3 + 162),
\end{equation*}
\begin{equation*}
C=(27\zeta_3 - 243)x^4+ (-1458\zeta_3 - 999)x^3+(-1215\zeta_3 + 1701)x^2+ (1944\zeta_3 + 2430)x+ 729\zeta_3.
\end{equation*}
We use the usual projection map $\phi\colon Y\to\BP^1_{K}$. The discriminant $\Delta_F$ has degree $10$, so the point $\infty$ is not a branch point of $\phi$ (cf.\ \cite[\mbox{Lemma 4.1}]{diss}). The branch locus of $\phi$ consists of one point of degree $1$ and one point of \mbox{degree $9$}.

We note that SageMath code for defining $Y$ and computing its tame locus may be found in \cite[\mbox{Code Listing A.6}]{diss}. The tame locus consists of three discoids, configured as in \mbox{Figure \ref{fig-2}}.

\begin{figure}[!htb]
\centering
\begin{minipage}{.5\textwidth}
\centering
\includegraphics[scale=0.4]{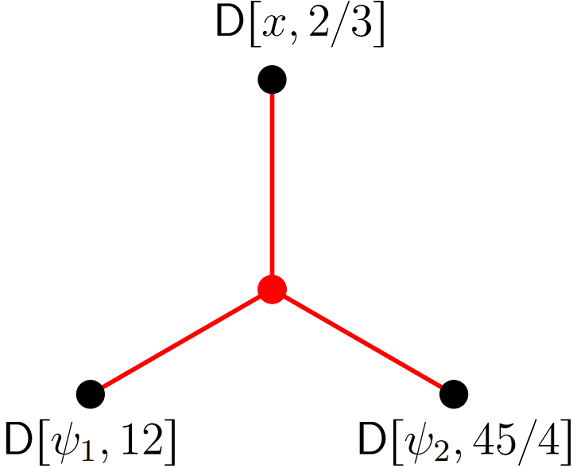}
\caption{The tree in $(\BP_K^1)^{\an}$ spanned by the boundary points of the tame locus associated to $Y$. This figure is from \cite[\mbox{Section 5.6}]{diss}}
\label{fig-2}
\end{minipage}%
\begin{minipage}{.5\textwidth}
\centering
\includegraphics[scale=0.26]{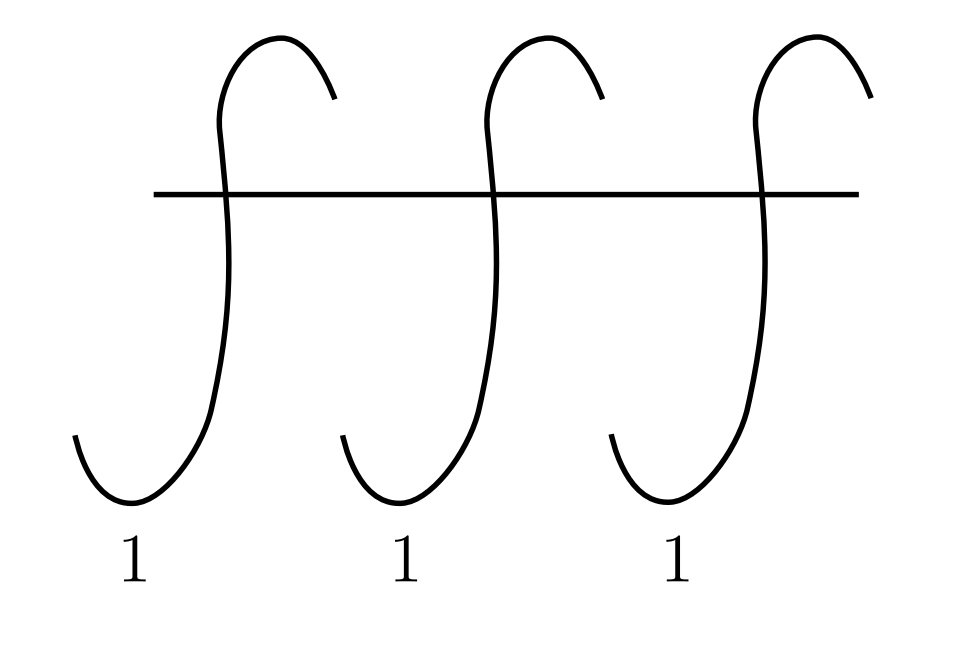}
\captionof{figure}{The stable reduction of the curve $Y$. This figure is from \cite[\mbox{Example 5.17}]{diss}}
\label{fig-3}
\end{minipage}
\end{figure}

The disk $\rD[x,2/3]$ contains $\infty$ and the branch point of degree $1$. The polynomial
\begin{equation*}
\begin{aligned}
\psi_1&=x^9 + (9\zeta_3 + 18)x^8 + (54\zeta_3 - 27/2)x^7 + (54\zeta_3 + 108)x^6 + 486\zeta_3x^5+ (-729\zeta_3 + 3645)x^4 \\&+ (729\zeta_3 + 13851)x^3+ (37179\zeta_3 + 15309)x^2 + (-6561\zeta_3/2 + 72171)x + 2187\zeta_3/13 + 15309/2
\end{aligned}
\end{equation*}
is the factor of $\Delta_F$ of degree $9$, so $\rD[\psi_1,12]$ contains the other branch point of $\phi$. The last component, the tail discoid $\rD[\psi_2,45/4]$, is the most important; we have
\begin{equation*}
\begin{aligned}
\psi_2&=x^9 + (9\zeta_3 - 9)x^8 + (54\zeta_3 + 27)x^7 + (54\zeta_3 - 27/2)x^6+ (243\zeta_3 + 972)x^5 + 729\zeta_3 x^4\\ &+ (2916\zeta_3 - 1458)x^3 + (37179\zeta_3 + 41553)x^2 + (6561\zeta_3 + 6561/8)x - 63423\zeta_3 + 155277.
\end{aligned}
\end{equation*}
We also note that the red vertex in the middle of the tree in Figure \ref{fig-2} is the boundary point of the discoid $\rD[\psi_1,11]=\rD[\psi_2,11]$. The splitting behavior of all these discoids is particularly interesting. While the discoids $\rD[\psi_2,45/4]$ and $\rD[\psi_2,11]$ split into three disks over $\BK$ each containing three of the nine roots of $\psi_2$, the discoid $\rD[\psi_1,12]$ splits into nine disks, each containing one root of $\psi_1$. 

Since there are three tail disks, the structure of the stable reduction of $Y$ is clear: It must be a ``comb'' consisting of one rational component intersecting three components of genus $1$, which are Artin--Schreier curves given by
\begin{equation*}
y^3-y=x^2,
\end{equation*}
configured as in \mbox{Figure \ref{fig-3}}.

The tree spanned by all the roots of $\Delta_F$ and $\psi_2$ is depicted in \mbox{Figure \ref{fig-big-modular-tree}}. As usual, the locus where $\delta_\phi>0$ is depicted in red. The dashed gray lines represent disks of the indicated radius. For example, the smallest disk containing all roots of both $\psi_1$ and $\psi_2$ has radius $7/6$. We see that the nine disks into which $\rD[\psi_1,12]$ splits have radius $2$, while the three disks into which $\rD[\psi_2,45/4]$ splits have radius $17/12$. Given a root of either $\psi_1$ or $\psi_2$, the closest other root has distance $3/2$.

\begin{Rem}
\begin{enumerate}[(a)]
\item In an earlier version of this manuscript (\cite{ossen}), a field extension of $K$ over which $Y$ has semistable reduction is determined. It is of ramification index $54$ over $K$.
\item In \cite[\mbox{Section 3}]{ossen}, a coordinate transformation is used to ensure the curve $Y$ has an inflection point at infinity. It turns out that in these new coordinates, the strategy of separating branch points works to compute the semistable reduction of $Y$! We have no answer to the interesting question of whether such a coordinate transformation may be found in other examples as well; in this example, this transformation was found by accident.
\end{enumerate}
\end{Rem}

\begin{figure}[htb]\centering\includegraphics[scale=0.55]{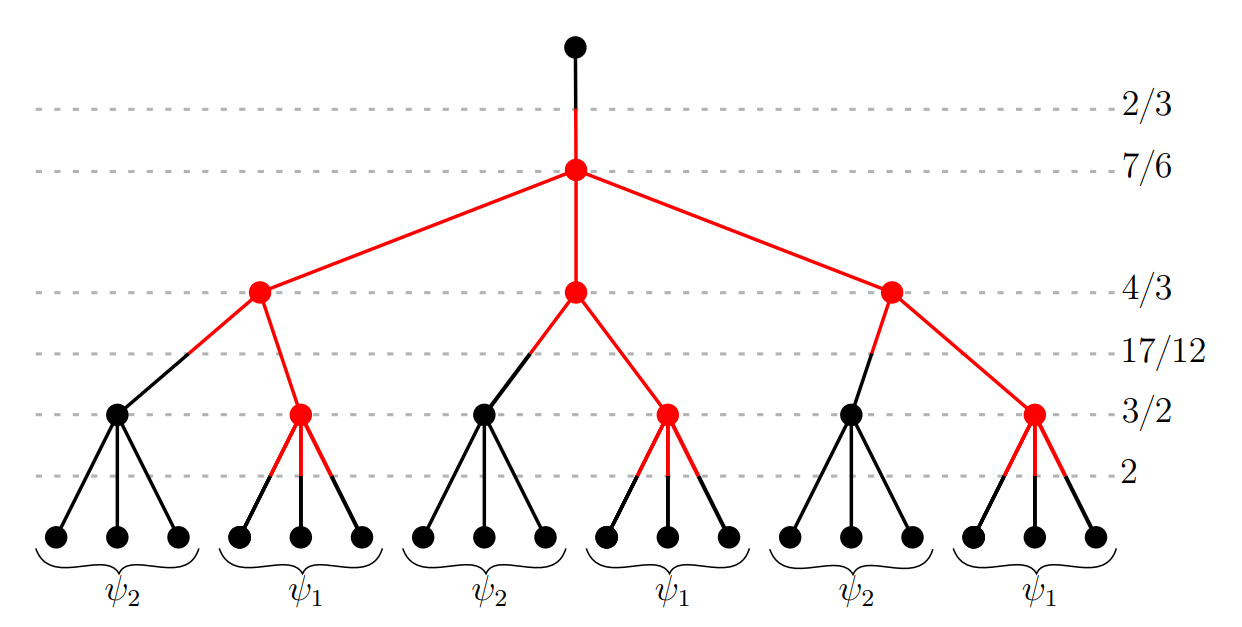}
\caption{The tree in $(\BP_\BK^1)^{\an}$ spanned by the zeros of $\Delta_F$ and $\psi_2$. This figure is from \cite[\mbox{Figure 5.3}]{diss}}\label{fig-big-modular-tree}
\end{figure}  

\FloatBarrier

\subsection*{Data availability} The code used for computations in the examples above may be found in \cite{diss}, to which appropriate references are given. This code is based on a branch of the MCLF SageMath package, available on Github under \href{https://github.com/oossen/mclf/tree/plane-quartics}{https://github.com/oossen/mclf/tree/plane-quartics}.

\subsection*{Competing interests statement}

This research was partially carried out while the author was employed as a research assistant at the Institute of Algebra and Number Theory at Ulm University. There are no competing interests with relevance to the contents of this article to declare.

\bibliographystyle{plain}
\bibliography{new_sources}

\end{document}